\documentclass[a4paper, 11pt]{amsart}

\usepackage{amsmath,amsthm,amssymb}
\usepackage{mathtools}
\usepackage{color}
\usepackage{enumerate}
\usepackage{ulem}
\usepackage[driverfallback=dvipdfm]{hyperref}
\usepackage[dvipdfmx]{graphicx}
\usepackage{tikz}
\usetikzlibrary{intersections, calc, arrows.meta}
\usepackage{bbold}
\usepackage{cite}

\theoremstyle{plain}
	\newtheorem{thm}{Theorem}[section]
	\newtheorem{cor}[thm]{Corollary}
	\newtheorem{lem}[thm]{Lemma}
	\newtheorem{prop}[thm]{Proposition}
	
	\newtheorem*{prob}{Problem}

\theoremstyle{definition}
	\newtheorem{dfn}[thm]{Definition}

\theoremstyle{remark}

\makeatletter
	
	\@addtoreset{equation}{section}
\makeatother

	\def\R{\mathbb{R}}
	\def\C{\mathbb{C}}
	
	\def\D{\Delta}

	\def\DL{\mathcal{D}}
	
	\def\IL{\mathcal{I}}
	
	\def\M{\mathcal{M}}
	
	\def\a{\alpha}
	\def\b{\beta}

	\def\cc{\overline}
	\newcommand\id{\mathrm{id}}
	\newcommand\0{\mathbb{0}}
	\newcommand\1{\mathbb{1}}
	\newcommand\eq{\Longleftrightarrow}
	
	\newcommand\CC{C[0,1]}
	\newcommand\CCR{C_\R[0,1]}
	\newcommand\CD{C^1[0, 1]}
	\newcommand\CR[1]{C_\R(#1)}
	\newcommand\CDR{C^1_\R[0, 1]}
	\newcommand\Lip{\operatorname{Lip}[0, 1]}
	\newcommand\LipR{\operatorname{Lip}_\R[0, 1]}
	\newcommand\AC{\operatorname{AC}[0, 1]}
	\newcommand\ACR{\operatorname{AC}_\R[0, 1]}
	\newcommand\PBCR[1]{B^+(C_\R(#1))}
	\newcommand\PSCR[1]{S^+(C_\R(#1))}

	\newcommand\PSsp{S_{C^1, p}^+}

	\newcommand\PSLsp{S_{\operatorname{Lip}, p}^+}

	\newcommand\PSREp[1]{S^+(\R \oplus_p E_{#1})}
	\newcommand\PSRCp[1]{S^+(\R \oplus_p C_\R(#1))}
	\newcommand\PSRCm[1]{S^+(\R \oplus_\infty C_\R(#1))}
	\newcommand\PSCCp[1]{S^+(\C \oplus_p C(#1))}

	\newcommand\PBE[1]{B^+(E_{#1})}

	\newcommand\norm[1]{\|#1\|}
	
	\newcommand\sn[1]{\left\|#1\right\|_\infty}
	\newcommand\Cp[1]{\left\|#1\right\|_{C, p}}
	\newcommand\Sigp[1]{\left\|#1\right\|_{\Sigma, p}}
	\newcommand\Sigm[1]{\left\|#1\right\|_{\Sigma, \infty}}
	\newcommand\sigp[1]{\left\|#1\right\|_{\sigma, p}}
	\newcommand\sigm[1]{\left\|#1\right\|_{\sigma, \infty}}
	\newcommand\Mnorm[1]{\left\|#1\right\|_M}
	\newcommand\mnorm[1]{\left\|#1\right\|_m}
	\newcommand\normp[1]{\left\|#1\right\|_p}
	\newcommand\normm[1]{\left\|#1\right\|_\infty}

	\newcommand\set[1]{\left\{\,#1\,\right\}}

	\let\diam\relax
	\DeclareMathOperator{\diam}{diam}

\title[Isometries on $S_{C^1[0,1]}^+$]{Surjective isometries on the positive parts of the unit spheres of some function spaces}

\author[Y. Enami]{Yuta Enami}
\address[Y. Enami]
{Graduate School of Science and Technology, Niigata University, Niigata 950-2181 Japan}
\email{f21j008j@mail.cc.niigata-u.ac.jp}

\author[D. Hirota]{Daisuke Hirota}
\address[D. Hirota]
{National Institute of Technology, Tsuruoka College
104 Sawada, Inooka, Tsuruoka, Yamagata 997-8511, Japan}
\email{dhirota@tsuruoka-nct.ac.jp}

\author[H. Koshimizu]{Hironao Koshimizu}
\address[H. Koshimizu]
{National Institute of Technology, Yonago College, Yonago 683-8502, Japan}
\email{koshimizu@yonago-k.ac.jp}

\author[M. R. Lin]{Min-Ruei Lin}
\address[M. R. Lin]
{Department of Applied Mathematics,
National Sun Yat-sen University,
Kaohsiung, 80424, Taiwan;
And
Graduate School of Science and Technology, Niigata University, Niigata 950-2181, Japan}
\email{m082030021@student.nsysu.edu.tw}

\subjclass[2020]{Primary 46B04; Secondary 46B20, 46E05, 47B49}
\keywords{isometries, positive unit spheres, Tingley's problem, $C^1$-space, Lipschitz space}

\begin{document}

\maketitle

\begin{abstract}
We consider the space $C^1[0, 1]$ of continuously differentiable functions on the closed unit interval $[0, 1]$ and the space $\operatorname{Lip}[0, 1]$ of Lipschitz continuous functions on $[0, 1]$, equipped with the norms 
\begin{align*}
\|f\|_{\sigma, p} = 
\begin{cases}
\sqrt[p]{|f(0)|^p + \|f'\|_\infty^p} & (1 \le p < \infty), \\
\max\{\, |f(0)|, \|f'\|_\infty \,\} & (p = \infty). 
\end{cases}
\end{align*}
We show that every surjective isometry on the positive part of the unit sphere extends to a surjective complex-linear isometry on the entire space. 
As a corollary, every such isometry also extends to an isometric order isomorphism on the real subspaces $C^1_{\mathbb{R}}[0, 1]$ and $\operatorname{Lip}_{\mathbb{R}}[0, 1]$. 
\end{abstract}


\section{Introduction}

Let $E$ and $F$ be Banach spaces, and let $A$ and $B$ be subsets of $E$ and $F$, respectively. 
A mapping $\Delta \colon A \to B$ is called an \textit{isometry} if 
\begin{align*}
\|\D(f) - \D(g)\| = \|f - g\| 
\end{align*}
for all $f, g \in A$. 

In 1972, Mankiewicz \cite{Man} proved that every surjective isometry between convex bodies in real Banach spaces can be extended to a surjective affine isometry between entire spaces. 
Subsequently, in 1987, Tingley \cite{Tin} posed the following question: 
Given normed spaces $E$ and $F$, does every surjective isometry between their unit spheres $S(E)$ and $S(F)$ extend to a surjective real-linear isometry between $E$ and $F$? 
This question is now known as \textit{Tingley's problem} and has motivated a substantial amount of research. 
Among the studies on function spaces, we refer the reader to \cite{CabCueEnaMiuPer, CueHirMiuPer, CuePer, Hat1, Hat2, HatOiShi, HirMiu, Wan1, Wan2}. 
Although many affirmative results have been obtained in various Banach spaces, the problem remains open in full generality. 

More recently, Peralta \cite{Per1} proposed several variants of Tingley's problem in the context of operator algebras. 
In particular, Peralta \cite{Per2} considered the question of whether every surjective isometry between the sets of positive norm-one elements in two $C^\ast$-algebras can be extended to a surjective complex-linear isometry between the $C^\ast$-algebras themselves. 
He obtained affirmative answers in the cases of the $C^\ast$-algebra of all bounded operators on Hilbert space and the $C^\ast$-algebra of all compact operators on separable Hilbert space. 

For an ordered Banach space $E$, we define the \textit{positive part of the unit sphere} of $E$ as
\begin{align*}
S^+(E) = \set{f \in E \colon \|f\| = 1, \,\, f \ge 0}. 
\end{align*}
Motivated by Peralta's question, Leung, Ng and Wong \cite{LeuNgWon} posed the following problem: 

\begin{prob}
Given ordered Banach spaces $E$ and $F$, is it true that every surjective isometry $\Delta \colon S^+(E) \to S^+(F)$ can be extended to a real-linear isometry $T \colon E \to F$?  
\end{prob}

We briefly summarize some known results concerning this problem in the context of function spaces. 
Leung, Ng and Wong \cite{LeuNgWon} solved this problem affirmatively for the Banach space $C(K)$ of continuous functions on a compact Hausdorff space $K$, as well as for $L^p$-spaces with $1 \le p \le \infty$. 
Furthermore, Ezumi, Lin and Miura \cite{EzuLinMiu} extended the result for $C(K)$ to the Banach space $C_0(L)$ of continuous functions on a locally compact Hausdorff space $L$ that vanish at infinity. 

In this paper, we study the above problem for the space $\CD$ of continuously differentiable functions on the closed unit interval $[0, 1]$ and the space $\Lip$ of Lipschitz continuous functions on $[0, 1]$. 
The space $\CD$ can be equipped with various norms. 
For example, the following norms are commonly considered on $\CD$: 
\begin{align*}
\Cp{f} & = 
\sup_{t \in [0, 1]} \sqrt[p]{|f(t)|^p + |f'(t)|^p}, \\
\Sigp{f} & = 
\begin{cases}
\sqrt[p]{\sn{f}^p + \sn{f'}^p} & (1 \le p < \infty), \\
\max\set{\sn{f}, \sn{f'}} & (p = \infty), 
\end{cases}
\\
\sigp{f} & = 
\begin{cases}
\sqrt[p]{|f(0)|^p + \sn{f'}^p} & (1 \le p < \infty), \\
\max\set{|f(0)|, \sn{f'}} & (p = \infty), 
\end{cases}
\end{align*}
where $\sn{f'}$ stands for the supremum norm of the derivative $f'$. 
The norms $\Sigm{\cdot}$ and $\sigm{\cdot}$ are also denoted by $\Mnorm{\cdot}$ and $\mnorm{\cdot}$, respectively. 
Equipped with each of the above norms, $\CD$ is a Banach space. 
By interpreting $\sn{f'}$ as the essential supremum norm of $f'$, the same formulas define norms on $\Lip$, making it a Banach space as well, except for $\Cp{\cdot}$. 

Isometries on these spaces have been investigated extensively; see, for example, \cite{CabCueEnaMiuPer, Cam, HirMiu, Kaw1, Kaw2, KawKosMiu, Kos1, Kos2, RaoRoy, Wan2}. 
Despite the extensive study of surjective isometries on $\CD$ and $\Lip$, little seems to be known about isometries between the positive parts of their unit spheres. 
The purpose of this paper is to investigate the positive-sphere version of Tingley's problem for these spaces equipped with the norm $\sigp{\cdot}$.


\subsection{Notations and Main Results}

For a Banach space $E$, the closed unit ball and the unit sphere of $E$ are denoted by $B(E)$ and $S(E)$, respectively. 
If $E$ is an ordered Banach space or the complexification of an ordered Banach space, the \textit{positive part} of the unit ball and unit sphere are defined as 
\begin{align*}
B^+(E) = \set{f \in B(E) \colon f \ge 0} \quad \text{and} \quad 
S^+(E) = \set{f \in S(E) \colon f \ge 0}, 
\end{align*}
respectively. 

Let $C(K)$ denote the Banach space of all continuous complex-valued functions on a compact Hausdorff space $K$, endowed with the supremum norm $\sn{u} = \sup_{t \in K} |u(t)|$. 
Let $\CR{K}$ be the real subspace of $C(K)$ consisting of all continuous real-valued functions on $K$. 
Note that $\CR{K}$ is an ordered Banach space with respect to the pointwise order, and $C(K)$ coincides with the complexification of $\CR{K}$. 
The symbols $\0$ and $\1$ denote the zero function $\0(t) = 0$ and the constant function $\1(t) = 1$, respectively. 

We shall consider the space $\CD$ of all continuously differentiable complex-valued functions on $[0, 1]$ and the space $\Lip$ of Lipschitz continuous complex-valued functions on $[0, 1]$, equipped with the norm
\begin{align*}
\sigp{f} & = 
\begin{cases}
\sqrt[p]{|f(0)|^p + \sn{f'}^p} & (1 \le p < \infty), \\
\max\set{|f(0)|, \sn{f'}} & (p = \infty). 
\end{cases}
\end{align*}
Note that every Lipschitz function $f \in \Lip$ is differentiable almost everywhere on the closed unit interval $[0, 1]$, and its derivative $f'$ belongs to $L^\infty[0, 1]$. 
Hence, for $f \in \Lip$, the symbol $\sn{f'}$ is interpreted as the essential supremum norm rather than the usual supremum norm. 
Let $\CDR$ and $\LipR$ be the real subspaces consisting of all real-valued $f \in \CD$ and all real-valued $f \in \Lip$, respectively. 
Both $\CDR$ and $\LipR$ are ordered Banach spaces with respect to the order defined by 
\begin{align*}
f \le g \quad 
\overset{\mathrm{def}}{\Longleftrightarrow} \quad 
f(0) \le g(0) \,\,\,\, \text{and} \,\,\,\, f' \le g' \,\, \text{on} \,\, [0, 1]. 
\end{align*}
Of course, for $f, g \in \LipR$, ``$f' \le g'$ on $[0, 1]$'' is understood in the almost everywhere sense. 
To avoid cumbersome notation, we introduce the following abbreviations for the positive parts of the unit spheres of these spaces: 
\begin{align*}
\PSsp & = S^+(\CD, \sigp{\cdot}) = S^+(\CDR, \sigp{\cdot}) \quad \text{and} \\
\PSLsp & = S^+(\Lip, \sigp{\cdot}) = S^+(\LipR, \sigp{\cdot}). 
\end{align*}

The following theorems are the main results of this paper: 

\begin{thm}\label{ThmMainC1}
Let $\Delta \colon \PSsp \to \PSsp$ be a surjective isometry. 
Then there exists a surjective complex-linear isometry $T \colon \CD \to \CD$ such that $T|_{\PSsp} = \Delta$. 
Moreover, $T$ is represented as  
\begin{align*}
T(f)(t) = f(0) + \int_0^t f'(\tau(x)) dx \qquad 
(f \in \CD, \,\, t \in [0, 1]), 
\end{align*}
where $\tau \colon [0, 1] \to [0, 1]$ is a homeomorphism. 
\end{thm}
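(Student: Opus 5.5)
The strategy is to transfer the problem to a known setting. The map $f \mapsto (f(0), f')$ is an isometric, order-preserving, surjective linear map from $(\CD, \sigp{\cdot})$ onto $\C \oplus_p C[0,1]$, and from $\CDR$ onto $\R \oplus_p \CCR$. The order on $\CDR$ corresponds exactly to the coordinatewise order on $\R \oplus_p \CCR$. Consequently $\PSsp$ is identified with $\PSRCp{[0,1]}$, and $\Delta$ induces a surjective isometry $\widetilde\Delta$ of $\PSRCp{[0,1]}$. The theorem therefore reduces to the following claim: every surjective isometry of $\PSRCp{K}$, with $K=[0,1]$ connected, has the form
\[
(a,u)\mapsto (a, u\circ\tau)
\]
for some homeomorphism $\tau$ of $K$. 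Pulling this back gives $T(f)(0)=f(0)$ and $T(f)'=f'\circ\tau$, which is exactly the stated formula. $T$ is clearly a complex-linear surjective isometry of $\CD$, since composition with $\tau$ preserves $\sn{\cdot}$.

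To prove the claim, I would follow the strategy of Leung--Ng--Wong for $C(K)$, working with metric invariants of points of $\PSRCp{K}$. The first step is to identify distinguished elements metrically.
\begin{itemize}
\item $(1,\0)$ is singled out, for instance as the unique point at maximal distance from a large family, or through the structure of the sets $\{g : \|f-g\| = \text{diam}\}$. Note that the diameter of $\PSRCp{K}$ is $1$ for $p<\infty$: for $f,g\ge0$ we have $|a-b|\le\max(a,b)$ coordinatewise.
\item The ``peak'' elements are the $(0,u)$ with $\sn{u}=1$.
\end{itemize}
Next I would prove that $\widetilde\Delta$ fixes $(1,\0)$ and maps $\{0\}\times\PSCR{K}$ onto itself. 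Distances between $(a,u)$ and $(1,\0)$ equal $\sqrt[p]{(1-a)^p+\sn{u}^p}$, so the first coordinate should also be preserved. Restricting to $a=0$ gives a surjective isometry of $\PSCR{K}$, which by Leung--Ng--Wong is $u\mapsto u\circ\tau$. Connectedness of $[0,1]$ excludes any sign or clopen splitting, and positivity rules out multiplication by a unimodular function.

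The last step extends this to general $(a,u)$ with $a^p+\sn{u}^p=1$. Compare the distances from $(a,u)$ to the points $(0,v)$ and $(1,\0)$. The quantity
\[
\sqrt[p]{a^p+\sn{u-v}^p}
\]
should determine $u$ through its values as $v$ ranges over functions peaking near chosen points. For example, take bump functions $v$ equal to $1$ at $t$: then $\sn{u-v}$ recovers $1-u(t)$ in the limit when $\sn{u}$ is small, and other choices of $v$ handle the remaining cases. This yields $\widetilde\Delta(a,u)=(a,u\circ\tau)$.

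\textbf{Main obstacle.} The hardest step is the metric characterization showing that $\widetilde\Delta$ preserves the splitting into the $\R$-coordinate and the $C_\R(K)$-part, i.e. fixes $(1,\0)$ and preserves $\{0\}\times\PSCR{K}$. The $p$-sum mixes the coordinates, so the argument must treat $p=1$, $1<p<\infty$ and $p=\infty$ separately. I would try to characterize these points through maximal convex subsets of the sphere, or through sets of points at distance exactly $1$ (for $p=\infty$, at distance $\text{diam}$). For example, $(1,\0)$ is at distance $1$ from every $(0,v)$, and I would look for a cardinality- or intersection-type property that no other point shares. For $p=\infty$ the sphere contains many segments, and there I would instead use the characterization of the sets $\{g:\|f-g\|=1\}$.
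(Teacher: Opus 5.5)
Your reduction to $\PSRCp{[0,1]}$ and the target form $(a,u)\mapsto(a,u\circ\tau)$ match the paper. However, the step you flag as the hardest, namely that the induced isometry fixes $(1,\0)$, is never proved, and a miscalculation hides the easy argument. For $1\le p<\infty$ the diameter of $\PSRCp{K}$ is $\sqrt[p]{2}$, not $1$: $\normp{(1,\0)-(0,v)}=\sqrt[p]{1+1}$ for every $v\in\PSCR{K}$. Your coordinatewise inequality $|a-b|\le\max\{a,b\}$ only gives $|a-b|\le 1$ and $\sn{u-v}\le 1$ separately, i.e.\ $\diam(\PBCR{K})=1$. That fact is precisely the missing idea (Lemma~\ref{LemAlternative} in the paper): if $|a-b|^p+\sn{u-v}^p=2$, then $|a-b|=1$, so one of the two points is $(1,\0)$. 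Apply this to the images of $(1,\0)$ and of two distinct points of the $0$-slice; injectivity then forces $\widetilde\Delta(1,\0)=(1,\0)$. This works uniformly for all $1\le p<\infty$, so no case split between $p=1$ and $p>1$ is needed.

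Once this is in place, your plan does work, along a route different from the paper's. Distance to $(1,\0)$ preserves slices, and the $0$-slice is isometric to $\PSCR{K}$, so Leung--Ng--Wong yields $\tau$. Your extension step, which you only sketch, closes cleanly. Write $\widetilde\Delta(a,u)=(a,u_1)$ and $w=u_1\circ\tau^{-1}$. Then $\sn{w-v}=\sn{u-v}$ for all $v\in\PSCR{K}$ and $\sn{w}=\sn{u}<1$. The test function $v=u+(1-\sn{u})\1$ forces $w\ge u$, and symmetrically $u\ge w$. The paper instead passes to the induced isometry of $\PBCR{K}$ fixing $\0$ and uses Mankiewicz's theorem plus Banach--Stone, without Leung--Ng--Wong.

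For $p=\infty$ your scheme genuinely fails, and nothing replaces it. We have $\normm{(a,u)-(1,\0)}=\max\{1-a,\sn{u}\}=1$ for every $(a,u)$ with $\sn{u}=1$, whatever $a$ is. So distance to $(1,\0)$ does not detect the first coordinate. Moreover, the diameter $1$ is attained by many pairs, so it does not single out $(1,\0)$ either.

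The paper handles this case differently. It identifies $\R\oplus_\infty\CR{K}$ isometrically and order-isomorphically with $\CR{\{x_0\}\cup K}$ and applies Leung--Ng--Wong to obtain a homeomorphism $\rho$. Since $[0,1]$ has no isolated points, $x_0$ is the unique isolated point, so $\rho$ must send the added point to the added point. This gives $(a,u)\mapsto(a,u\circ\tau)$ for $p=\infty$ as well.
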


\begin{thm}\label{ThmMainLip}
Let $\Delta \colon \PSLsp \to \PSLsp$ be a surjective isometry. 
Then there exists a surjective complex-linear isometry $T \colon \Lip \to \Lip$ such that $T|_{\PSLsp} = \Delta$. 
Moreover, $T$ is represented as  
\begin{align*}
T(f)(t) = f(0) + \int_0^t \Lambda(f')(x) dx \qquad 
(f \in \Lip, \,\, t \in [0, 1]), 
\end{align*}
where $\Lambda \colon L^\infty[0, 1] \to L^\infty[0, 1]$ is an isometric $\ast$-isomorphism. 
Furthermore, there exists a homeomorphism $\tau \colon \M \to \M$ such that 
\begin{align*}
\Lambda(u) = \Gamma^{- 1}(\Gamma(u) \circ \tau) \qquad 
(u \in L^\infty[0, 1]), 
\end{align*}
where $\M$ and $\Gamma \colon L^\infty[0, 1] \to C(\M)$ stand for the maximal ideal space and the Gelfand transform of $L^\infty[0, 1]$, respectively. 
\end{thm}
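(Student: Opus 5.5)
The plan is to reduce Theorem \ref{ThmMainLip} to the known positive-sphere result for $C(K)$ spaces (Leung, Ng and Wong \cite{LeuNgWon}), transported through the Gelfand transform. The map $f \mapsto (f(0), f')$ is a surjective linear isometry
\[
\Phi \colon (\Lip, \sigp{\cdot}) \to \C \oplus_p L^\infty[0,1],
\]
whose inverse is $(a,u) \mapsto a + \int_0^t u(x)\,dx$. The order on $\LipR$ is defined through $f(0)$ and $f'$, so $\Phi$ is also an order isomorphism on the real parts. Composing with $\id \oplus \Gamma$ identifies $\PSLsp$ with $\PSCCp{\M}$; the positive part of this sphere is $\PSRCp{\M}$, which we may view as the positive sphere of $C_\R(\{\ast\} \sqcup \M)$ carrying the mixed norm $\max\{|a|, \sn{u}\}$ (for $p=\infty$) or the $\ell^p$-sum norm. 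It therefore suffices to prove that every surjective isometry $\D$ of $\PSRCp{\M}$ has the form $(a,u) \mapsto (a, u\circ\tau)$ for a homeomorphism $\tau$ of $\M$. Linearity then produces $T$, and $\Lambda(u) = \Gamma^{-1}(\Gamma(u)\circ\tau)$ is automatically an isometric $\ast$-isomorphism.

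The key steps are as follows.
\begin{enumerate}[(1)]
\item Characterize metrically the special elements $(1,\0)$ and $(0,u)$. I would use quantities such as $\sup_g \|f-g\|$ and the sets $\{g : \|f-g\| = \text{diameter}\}$ to show that $\D$ fixes $(1,\0)$ and preserves the face $\{(0,u) : u \ge 0, \sn{u} = 1\}$. For $p=\infty$ the face structure is more degenerate, so I would compare the sets $\{g : \|f-g\| = 2^{1/p}\}$ or $\{g : \|f-g\| = 1\}$ instead.
\item Restrict $\D$ to that face, which is exactly $\PSCR{\M}$. By \cite{LeuNgWon} the restriction has the form $u \mapsto u\circ\tau$ for a homeomorphism $\tau$.
\item Show that a general element $(a,u)$, with $|a|^p + \sn{u}^p = 1$, maps to $(a, u\circ\tau)$. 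I would do this by computing its distances to $(1,\0)$ and to all elements $(0,v)$. The distances to $(0,v)$ recover $a$ together with the values $\sup (u - v)^+$-type data. Choosing peaking functions $v$ at points of $\M$, which are clopen characteristic functions since $\M$ is extremally disconnected, should determine $u$ pointwise.
\end{enumerate}
After these steps, $\D = (\id\oplus\Gamma)^{-1}\circ\Phi^{-1}$-conjugate of the explicit linear map, and $T$ is the stated extension.

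The main obstacle is step (3), especially for $1<p<\infty$. There the distance $\|(a,u)-(b,v)\| = (|a-b|^p + \sn{u-v}^p)^{1/p}$ mixes the two coordinates, so one must first recover the scalar coordinate $a$, for instance from the distance to $(1,\0)$, which equals $(|1-a|^p + \sn{u}^p)^{1/p}$, combined with the constraint $|a|^p + \sn{u}^p = 1$. Only then can the function part be isolated. I would first try to prove that $a \mapsto$ (first coordinate of $\D(a,u)$) is the identity using this monotone relation. The $C^1$ case in Theorem \ref{ThmMainC1} would follow the same scheme with $\M$ replaced by $[0,1]$, since $f\mapsto(f(0),f')$ maps $C^1[0,1]$ onto $\C\oplus_p C[0,1]$.
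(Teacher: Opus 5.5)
\textbf{The gap is at $p=\infty$.} The theorem covers $p=\infty$, and there your plan fails. The missing ingredient is that $\M$ has no isolated points (Proposition~\ref{PropIsolatedM}). Write $\Phi$ for the conjugated isometry of $\PSRCm{\M}$.

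For $p=\infty$ the metric data you rely on in steps (1) and (3) carry almost no information:
\begin{enumerate}[(i)]
\item The distance from $(a,u)$ to $(1,\0)$ is $\max\{1-a,\|u\|_\infty\}$. This equals $1$ for every $a$ once $\|u\|_\infty=1$, so the first coordinate is not recovered.
\item The distance from $(1,u)$ to any $(0,v)$ with $v\in\PSCR{\M}$ is $\max\{1,\|u-v\|_\infty\}=1$. So step (3) cannot identify $\Phi(1,u)$.
\end{enumerate}

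This is not a technicality. For $\R\oplus_\infty C_\R(K)$ with $K$ having an isolated point, the conclusion is false. For $K$ a singleton, the swap $(a,b)\mapsto(b,a)$ is a surjective isometry of the positive sphere of $(\R^2,\max\{|a|,|b|\})$ that is not of the form $(a,u)\mapsto(a,u\circ\tau)$. Extremal disconnectedness of $\M$, the only property you invoke, does not exclude isolated points: finite discrete spaces are extremally disconnected.

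Under your identification with $\PSCR{\{x_0\}\cup\M}$, the point $(1,\0)$ is $\chi_{\{x_0\}}$. What separates it from elements $(0,\chi_V)$ with $V$ clopen is exactly that $\{x_0\}$ is a minimal clopen set while no clopen subset of $\M$ is. Any proof must therefore use that $L^\infty[0,1]$ has no minimal projections.

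With that fact, the identification you already wrote down settles $p=\infty$ at once, and this is the paper's route. Apply Leung--Ng--Wong to $\PSCR{\{x_0\}\cup\M}$ to get a homeomorphism $\rho$. Since $x_0$ is the unique isolated point, $\rho(x_0)=x_0$. Then $\tau=\rho|_{\M}$ gives $\Phi(a,u)=(a,u\circ\tau)$.

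\textbf{For $1\le p<\infty$ your outline is correct but takes a different route.} Your step (1) is the paper's Lemmas~\ref{LemSliceCharacterization}--\ref{LemPreservingSlices}, which rely on $\|u-v\|_\infty\le 1$ on $B^+(C_\R(\M))$. After that, the paper does not use Leung--Ng--Wong at all. Once slices are preserved, $\Phi(a,u)=(a,\Phi'(u))$ and the $\ell^p$-formula decouples. So $\Phi'$ is a surjective isometry of the convex body $B^+(C_\R(\M))$ fixing $\0$. Mankiewicz plus Banach--Stone, with $\Phi'(\1)\ge 0$, then give $\Phi'(u)=u\circ\tau$ on the whole ball, with no pointwise reconstruction.

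Your route (LNW on the $0$-slice, then distances to that slice) also works. However, characteristic functions of clopen sets are an awkward choice of test function: $\|u-\chi_U\|_\infty=\max\{1-\inf_U u,\sup_{U^c}u\}$ is hard to invert once $u$ takes values on both sides of $1/2$. After conjugating by $\tau$, what you need is this: if $u,w\in B^+(C_\R(\M))$ and $\|u-v\|_\infty=\|w-v\|_\infty$ for all $v\in\PSCR{\M}$, then $u=w$.

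A clean argument: suppose $u(x)<w(x)$ and take $v=\min\{\1,w+(1-w(x))\1\}$, which lies in $\PSCR{\M}$. Then $\|w-v\|_\infty\le 1-w(x)<1-u(x)\le\|u-v\|_\infty$, a contradiction. By symmetry, $u=w$.

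So the genuinely hard case for your method is $p=\infty$, not $1<p<\infty$.
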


The study of Tingley's problem often relies on the fact, established by Cheng and Dong \cite{CheDon} and Tanaka \cite{Tan}, that every surjective isometry between unit spheres preserves maximal convex subsets. 
However, this result cannot be applied directly to surjective isometries between the positive parts of the unit spheres. 
When $p = \infty$, the spaces $\CD$ and $\Lip$ are isometrically isomorphic to the spaces of continuous functions on suitable compact Hausdorff spaces, and hence we can apply the result of Leung, Ng and Wong \cite{LeuNgWon} to prove our main results. 
The case $1 \le p < \infty$ is substantially more delicate and requires a detailed analysis of the geometric structure of the positive part of the unit sphere.
To this end, in Section~\ref{Sec2}, we investigate the structure of the positive part of the unit sphere by exploiting the fact that both $\CD$ and $\Lip$ admit decompositions as direct sums of $\C$ and the spaces of continuous functions on appropriate compact Hausdorff spaces. 
Based on these preparations, we prove our main results in Section~\ref{Sec3}. 
Finally, in Section~\ref{Sec4}, we present several consequences and remarks related to the main theorems.


\section{Surjective Isometries on $S^+({\mathbb{R} \oplus_p E})$}\label{Sec2}

In this section, we develop a unified framework for proving Theorems~\ref{ThmMainC1} and \ref{ThmMainLip} for $1 \le p < \infty$. 
Let $E$ be an ordered Banach space, and let $1 \le p \le \infty$. 
The $\ell^p$-sum $\R \oplus_p E$ is the direct sum linear space $\R \oplus E$ endowed with norm
\begin{align*}
\normp{(\a, u)} = 
\begin{cases}
\sqrt[p]{|\a|^p + \norm{u}^p} & (1 \le p < \infty), \\
\max\set{|\a|, \norm{u}} & (p = \infty). 
\end{cases}
\end{align*}
Observe that $\R \oplus_p E$ is also an ordered Banach space with the product order defined by 
\begin{align*}
(\a, u) \le (\b, v) \quad \quad 
\overset{\mathrm{def}}{\Longleftrightarrow} \quad 
\a \le \b \,\,\, \text{and} \,\,\, u \le v. 
\end{align*}
The following theorem is the main result of this section. 

\begin{thm}\label{ThmMain1}
Let $1 \le p < \infty$, let $E_1$ and $E_2$ be ordered Banach spaces with $\diam(\PBE{1}) = \diam(\PBE{2}) = 1$, where $\diam(A)$ stands for the diameter of $A$. 
Let $\Phi \colon \PSREp{1} \to \PSREp{2}$ be a surjective isometry with respect to the norm $\normp{\cdot}$. 
Then there exists a surjective isometry $\Phi' \colon \PBE{1} \to \PBE{2}$ such that $\Phi'(0) = 0$ and 
\begin{align}\label{EqThmMain1}
\Phi(\a, u) = (\a, \Phi'(u))  
\end{align}
for every $(\a, u) \in \PSREp{1}$. 
\end{thm}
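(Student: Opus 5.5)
Throughout, write $X_i = \R \oplus_p E_i$. Every element of $S^+(X_i)$ has the form $(\a, u)$ with $\a \ge 0$, $u \ge 0$ and $\a^p + \norm{u}^p = 1$. So $\a = (1 - \norm{u}^p)^{1/p}$ is determined by $u$, and the projection $(\a, u) \mapsto u$ is a bijection from $S^+(X_i)$ onto $\PBE{i}$. The plan is to show that $\Phi$ preserves the first coordinate. Once that is done, $\Phi'$ is defined by $\Phi(\a, u) = (\a, \Phi'(u))$. Then $\Phi'(0) = 0$, since $(1, 0) \mapsto (1, \cdot)$ forces the second coordinate to be $0$, and $\Phi'$ is surjective because $\Phi$ is.

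For isometry of $\Phi'$: if $(\a, u), (\a', v)$ have first coordinates preserved, then
\[
|\a - \a'|^p + \norm{\Phi'(u) - \Phi'(v)}^p = |\a - \a'|^p + \norm{u - v}^p .
\]
This gives $\norm{\Phi'(u) - \Phi'(v)} = \norm{u - v}$ directly.

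The core is a metric characterization of the first coordinate. I will first show that $(1, 0)$ is intrinsically determined, for instance as the unique point whose distance to every other point is strictly less than $2^{1/p}$, or whose farthest distance is minimal. Take $(\a, u)$ and $(\b, v)$ in $S^+(X_1)$. Since $\diam(\PBE{1}) = 1$ and $u, v \in \PBE{1}$, we have $\norm{u - v} \le 1$, so
\[
\normp{(\a, u) - (\b, v)}^p \le |\a - \b|^p + 1 .
\]
For the point $(1, 0)$ the distance to $(\b, v)$ is $((1 - \b)^p + \norm{v}^p)^{1/p}$, and here $\norm{v}^p = 1 - \b^p$. I want $\sup_{(\b, v)} \normp{(\a, u) - (\b, v)}$ to be an explicit strictly monotone function $\rho(\a)$. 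The candidate farthest point is $(0, w)$ with $\norm{w} = 1$ and $\norm{u - w}$ as large as allowed, ideally $1$. That gives $\rho(\a) = (\a^p + 1)^{1/p}$, so the maximum is $2^{1/p}$ at $\a = 1$.

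If this works, distance to the farthest point recovers $\a$, and since $\Phi$ is a surjective isometry it preserves $\sup$-distances, hence first coordinates. The trouble is that achieving $\norm{u - w} = 1$ needs some $w \in \PSCR{}$-type element "far" from $u$, which may fail in a general $E$.

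Instead I will use a cleaner invariant: the distance to the set $F_i = \set{(0, w) : \norm{w} = 1, w \ge 0}$, which I must first show is $\Phi$-invariant. Alternatively, and more robustly, I will use the point $(1, 0)$ alone. Its distance to $(\b, v)$ is $d(\b) = ((1 - \b)^p + 1 - \b^p)^{1/p}$, and $d$ is strictly decreasing in $\b \in [0, 1]$. So it suffices to show $\Phi(1, 0) = (1, 0)$, and this reduces the whole proof to the characterization of $(1, 0)$.

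For that characterization, I will use that $\diam(\PBE{}) = 1$. Then for any $(\a, u)$, $(\b, v)$ the distance satisfies $\normp{\cdot}^p \le |\a - \b|^p + 1 \le 2$, with the bound $2^{1/p}$. Now consider, for each point $x$, the set of points at distance exactly $2^{1/p}$ from $x$. For $x = (1, 0)$, this set is $\set{(0, w)}$, which is nonempty since $S(E)^+$ is nonempty ($\diam = 1$). For $x = (\a, u)$ with $\a < 1$, attaining $2^{1/p}$ requires $|\a - \b| = 1$, so $\b \in \set{0, 1}$ and $\a \in \set{0, 1}$. Thus $\a = 0$, $\b = 1$, and the partner is $(1, 0)$.

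So the points having a partner at distance $2^{1/p}$ are exactly $(1, 0)$ together with those $(0, u)$, and each $(0, u)$ has the unique partner $(1, 0)$. Meanwhile $(1, 0)$ has all of $F_1$ as partners. If $F_1$ has at least two elements, $(1, 0)$ is characterized as the point with more than one partner. If $F_1$ is a singleton $\set{w}$, then the two points $(1, 0)$ and $(0, w)$ might be swapped. This degenerate case is the main obstacle, and I would handle it by a separate comparison. $E$ then has a single positive norm-one element; its positive ball is a segment (using $\diam = 1$), so $E^+$ is effectively one-dimensional and $S^+(X_i)$ is a quarter-circle arc in the $\ell^p$ plane. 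There, a swap is ruled out by checking that $\Phi$ must be the identity or the reflection, and the reflection still fits the required form only when it fixes things; I will verify this directly, expecting symmetric cases to need the stated conclusion to hold by a direct computation.

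Finally, the strict monotonicity of $d(\b)$ gives preservation of $\b$, and $\Phi'$ is constructed as above.
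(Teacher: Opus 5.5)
Your argument in the case where the $0$-slice $\set{(0,w) : w \in S^+(E_1)}$ has at least two points is essentially the paper's proof. The bound $\norm{u-v}\le 1$ forces every pair at distance $2^{1/p}$ to consist of $(1,0)$ and a point of the $0$-slice (Lemma~\ref{LemAlternative}). So $(1,0)$ is the only point with more than one such partner, and $\Phi$ fixes it (Lemma~\ref{LemPreservingUnity}). The first coordinate is then read off from the strictly monotone distance to $(1,0)$ (Lemma~\ref{LemSliceCharacterization}), and $\Phi'$ is built through the projection exactly as you describe.

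The gap is in the degenerate case you flagged, and it cannot be closed, because the statement is false there. Take $E_1 = E_2 = \R$ with the usual order. Then $\diam(\PBE{1}) = \diam([0,1]) = 1$, and $\PSREp{1}$ is the quarter arc $\set{(\a,u) : \a,u\ge 0,\ \a^p+u^p=1}$. The swap $(\a,u)\mapsto(u,\a)$ is a surjective isometry of this arc, since the $\ell^p$-norm on $\R^2$ is symmetric in the coordinates. Yet it sends $(1,0)$ to $(0,1)$, so no $\Phi'$ with $\Phi(\a,u)=(\a,\Phi'(u))$ exists. Your claim that the swap is ``ruled out'' by a direct computation on the arc is therefore wrong: the reflection is a legitimate isometry that violates the conclusion. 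The same happens whenever $E_1^+$ and $E_2^+$ are single rays.

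The paper's proof has the same hole. It chooses two distinct points of the $0$-slice ``because $\diam(\PBE{1})=1$'', but the diameter hypothesis does not guarantee this. The correct repair is an extra hypothesis such as ``$S^+(E_1)$ has at least two elements''. This holds for $\CR{K}$ whenever $K$ has at least two points, which covers the applications in Section~\ref{Sec3}. Under that hypothesis your partner-counting argument is complete, and the degenerate-case paragraph should be replaced by the hypothesis rather than by a verification.
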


The assumption $\diam(\PBE{}) = 1$ is satisfied, for example, when $E$ is the real-valued continuous function space $\CR{K}$ endowed with the pointwise order. 
As mentioned after the statement of our main results, the spaces $\CD$ and $\Lip$ can be decomposed as $\C \oplus_p C(K)$ for suitable compact Hausdorff spaces $K$. 
Thus the real parts $\CDR$ and $\LipR$ can be represented as $\R \oplus_p \CR{K}$. 
Therefore, Theorem~\ref{ThmMain1} provides a common framework for treating both spaces.

Throughout this section, let $E$ be an ordered Banach space with $\diam(\PBE{}) = 1$. 
To begin with, we introduce the slices of $\PSREp{}$ and characterize them in terms of the norm $\normp{\cdot}$. 

\begin{dfn}
For $0 \le r \le 1$, we define the \textit{$r$-slice} of $\PSREp{}$ to be the subset 
\begin{align*}
\PSREp{}(r) = \set{(\a, u) \in \PSREp{} \colon \a = r}. 
\end{align*}
\end{dfn}

\begin{lem}\label{LemSliceCharacterization}
Let $0 \le r \le 1$. 
The $r$-slice $\PSREp{}(r)$ is precisely the set of all $(\a, u) \in \PSREp{}$ whose distance from $(1, 0)$ is $\sqrt[p]{(1-r)^p + 1 - r^p}$: 
\begin{align*}
\PSREp{}(r) = \set{(\a, u) \in \PSREp{} : \normp{(\a, u) - (1, 0)} = \sqrt[p]{(1-r)^p + 1 - r^p}}. 
\end{align*}
\end{lem}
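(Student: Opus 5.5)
The plan is to compute the distance from $(1,0)$ directly for an arbitrary point of $\PSREp{}$ and show that it depends only on the first coordinate, through a strictly monotone function. Let $(\a,u)\in\PSREp{}$. Positivity gives $\a\ge 0$ and $u\ge 0$, and $\normp{(\a,u)}=1$ gives $\norm{u}^p=1-\a^p$. Hence $\a\in[0,1]$, and
\begin{align*}
\normp{(\a,u)-(1,0)}^p=|\a-1|^p+\norm{u}^p=(1-\a)^p+1-\a^p .
\end{align*}
So the distance from $(1,0)$ equals $\sqrt[p]{(1-\a)^p+1-\a^p}$, whatever $u$ is. The assumption $\diam(\PBE{})=1$ is not needed for this lemma.

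Both inclusions then reduce to comparing values of the function
\begin{align*}
\phi(s)=(1-s)^p+1-s^p,\qquad s\in[0,1].
\end{align*}
The inclusion ``$\subseteq$'' is immediate: if $\a=r$, the computation above gives exactly the prescribed distance. For ``$\supseteq$'', suppose $(\a,u)\in\PSREp{}$ has distance $\sqrt[p]{\phi(r)}$ from $(1,0)$. Then $\phi(\a)=\phi(r)$ with $\a,r\in[0,1]$, so it suffices to show that $\phi$ is injective on $[0,1]$. This gives $\a=r$, i.e. $(\a,u)\in\PSREp{}(r)$.

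The only real step is this injectivity, which I would get from strict monotonicity. For $p>1$, the derivative is
\begin{align*}
\phi'(s)=-p\bigl((1-s)^{p-1}+s^{p-1}\bigr)<0,
\end{align*}
since $(1-s)^{p-1}$ and $s^{p-1}$ cannot both vanish. Thus $\phi$ is strictly decreasing on $[0,1]$. For $p=1$ there is no derivative argument needed: $\phi(s)=2-2s$, which is strictly decreasing. In either case $\phi$ runs from $\phi(0)=2$ down to $\phi(1)=0$, so it is injective, and the lemma follows.
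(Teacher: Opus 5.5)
Your proposal is correct and follows essentially the same route as the paper: both compute the distance to $(1,0)$ from the constraint $\a^p + \norm{u}^p = 1$, and both reduce the reverse inclusion to strict monotonicity of a function of the first coordinate. Your $\phi(s) = (1-s)^p + 1 - s^p$ equals $1 - \bigl(s^p - (1-s)^p\bigr)$, which is exactly the paper's function up to sign and constant, and you also justify its monotonicity, which the paper only asserts.
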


\begin{proof}
Let $(r, u) \in \PSREp{}(r)$. 
Then we have $r^p + \norm{u}^p = \normp{(r, u)}^p = 1$, and hence $\norm{u}^p = 1 - r^p$. 
Thus we obtain 
\begin{align*}
\normp{(r, u) - (1, 0)}^p 
= \normp{(r - 1, u)}^p 
= |r - 1|^p + \norm{u}^p 
= (1 - r)^p + 1 - r^p, 
\end{align*}
which proves that $\normp{(r, u) - (1, 0)} = \sqrt[p]{(1- r)^p + 1 - r^p}$. 

Conversely, let $(\a, u) \in \PSREp{}$, and assume that $\normp{(\a, u) - (1, 0)} = \sqrt[p]{(1- r)^p + 1 - r^p}$. 
Then we have 
\begin{align*}
(1 - \a)^p + \norm{u}^p 
= (1 - r)^p + 1 - r^p. 
\end{align*}
On the other hand, it follows from $\normp{(\a, u)} = 1$ that 
\begin{align*}
\a^p + \norm{u}^p 
= 1. 
\end{align*}
These two equations show that the value $\a$ must satisfy 
\begin{align*}
\a^p - (1 - \a)^p 
= r^p - (1 - r)^p. 
\end{align*}
Note that the function $x \mapsto x^p - (1 - x)^p$ is strictly increasing on the closed interval $[0, 1]$, and hence, it is one-to-one on $[0, 1]$. 
Thus we obtain $\a = r$, which proves that $(\a, u) \in \PSREp{}(r)$. 
\end{proof}

As an immediate consequence of Lemma~\ref{LemSliceCharacterization}, we see that 
\begin{align}\label{EqSlice1}
\PSREp{}(1) = \set{(1, 0)}. 
\end{align}

Our next goal is to show that every surjective isometry $\Phi \colon \PSREp{1} \to \PSREp{2}$ preserves the point $(1, 0)$ and the slices. 
To this end, we need the following lemma. 

\begin{lem}\label{LemAlternative}
Let $(\a, u), (\b, v) \in \PSREp{}$. 
If $\normp{(\a, u) - (\b, v)} = \sqrt[p]{2}$, then either $(\a, u) = (1, 0)$ or $(\b, v) = (1, 0)$ holds. 
\end{lem}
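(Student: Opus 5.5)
We prove the lemma by bounding the two components of the difference separately. The first component is easy: since $\a, \b \in [0,1]$, we have $|\a - \b| \le \max\{\a, \b\}$. For the second, $u, v \ge 0$ with $\norm{u} \le 1$ and $\norm{v} \le 1$, so $u/\norm{u}$ (or $u$ itself) lies in $\PBE{}$. The hypothesis $\diam(\PBE{}) = 1$ alone only gives $\norm{u - v} \le 1$, which is too weak, so we first prove the scaled estimate
\begin{align*}
\norm{u - v} \le \max\set{\norm{u}, \norm{v}}.
\end{align*}
Assume without loss of generality that $\norm{u} \ge \norm{v}$ and $u \neq 0$; if $u = v = 0$ the bound is trivial. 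Put $s = \norm{u}$. Then $u/s$ and $v/s$ are positive with norms at most $1$, so both lie in $\PBE{}$. The diameter assumption gives $\norm{u/s - v/s} \le 1$, that is, $\norm{u - v} \le s$.

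Now write $m = \max\{\a, \b\}$ and $M = \max\{\norm{u}, \norm{v}\}$. Then
\begin{align*}
\normp{(\a, u) - (\b, v)}^p = |\a - \b|^p + \norm{u - v}^p \le m^p + M^p .
\end{align*}
Suppose the maximum $m$ is attained by $\a$. Then $m^p \le \a^p = 1 - \norm{u}^p$. Since $M^p \le 1$, we get $m^p + M^p \le 2$. If the left-hand side equals $2$, every inequality in the chain is an equality. In particular $M^p = 1$ and $\a^p = 1$.

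From $\a^p = 1$ and $\a^p + \norm{u}^p = 1$ we obtain $\norm{u} = 0$, so $(\a, u) = (1, 0)$. If instead the maximum is attained by $\b$, the symmetric argument gives $(\b, v) = (1, 0)$. This is exactly the required dichotomy.

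The main obstacle is the scaled diameter estimate $\norm{u - v} \le \max\{\norm{u}, \norm{v}\}$. It uses the fact that positivity is preserved under positive scaling, so that both normalized vectors genuinely lie in $\PBE{}$. Once that estimate is in hand, the rest is bookkeeping with the equality cases.
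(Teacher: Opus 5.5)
Your proof is correct and is essentially the paper's argument: both bound $|\a-\b|^p$ and $\norm{u-v}^p$ by $1$, so equality forces $\max\set{\a,\b}=1$, and then $\a^p+\norm{u}^p=1$ forces the $E$-component to vanish (the paper phrases this last step as ``the $1$-slice is $\set{(1,0)}$''). Your scaled estimate $\norm{u-v}\le\max\set{\norm{u},\norm{v}}$ is correct but unnecessary, because your final step only uses $M\le 1$. So your remark that $\norm{u-v}\le 1$ is ``too weak'' is mistaken: the paper uses exactly that bound.
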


\begin{proof}
Assume that $\normp{(\a, u) - (\b, v)} = \sqrt[p]{2}$. 
Then we have 
\begin{align*}
|\a - \b|^p + \norm{u - v}^p = 2. 
\end{align*}
Since $0 \le \a, \b \le 1$ and $\diam(\PBE{}) = 1$, we see that $0 \le |\a - \b| \le 1$ and $0 \le \norm{u - v} \le 1$. 
Hence the above equality implies that $|\a - \b| = 1$. 
Having in mind that $0 \le \a, \b \le 1$, we see that either $\a = 1$ or $\b = 1$, that is, either $(\a, u) \in \PSREp{}(1)$ or $(\b, v) \in \PSREp{}(1)$. 
It thus follows from \eqref{EqSlice1} that either $(\a, u) = (1, 0)$ or $(\b, v) = (1, 0)$. 
\end{proof}

Until the end of this subsection, we fix ordered Banach spaces $E_1$ and $E_2$ with $\diam(\PBE{1}) = \diam(\PBE{2}) = 1$ and an arbitrary surjective isometry $\Phi \colon \PSREp{1} \to \PSREp{2}$ with respect to the norm $\normp{\cdot}$. 

\begin{lem}\label{LemPreservingUnity}
The surjective isometry $\Phi \colon \PSREp{1} \to \PSREp{2}$ preserves the point $(1, 0)$: $\Phi(1, 0) = (1, 0)$. 
\end{lem}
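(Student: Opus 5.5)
The strategy is to characterize $(1,0)$ metrically inside $\PSREp{}$ as the unique point that has a partner at distance $\sqrt[p]{2}$. Since this property is intrinsic to the metric, the surjective isometry $\Phi$ must preserve it.

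\textbf{Step 1: $(1,0)$ attains distance $\sqrt[p]{2}$.}
I will exhibit a point of $\PSREp{1}$ at distance $\sqrt[p]{2}$ from $(1,0)$. By Lemma~\ref{LemSliceCharacterization} with $r=0$, every element of the $0$-slice $\PSREp{1}(0)$ has distance $\sqrt[p]{1+1}=\sqrt[p]{2}$ from $(1,0)$. So it suffices to know that the $0$-slice is nonempty, i.e.\ that there is $u\ge 0$ in $E_1$ with $\norm{u}=1$.

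This follows from $\diam(\PBE{1})=1$. Since the diameter is $1$, $\PBE{1}$ contains a nonzero positive element $w$ (note $0$ belongs to it). Then $u=w/\norm{w}$ is positive of norm one, and $(0,u)\in\PSREp{1}$.

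\textbf{Step 2: transport the pair by $\Phi$.}
Because $\Phi$ is an isometry, $\normp{\Phi(1,0)-\Phi(0,u)}=\sqrt[p]{2}$. Applying Lemma~\ref{LemAlternative} in $\PSREp{2}$, we get that either $\Phi(1,0)=(1,0)$ or $\Phi(0,u)=(1,0)$.

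\textbf{Step 3: rule out $\Phi(0,u)=(1,0)$.}
Suppose $\Phi(0,u)=(1,0)$. I will derive a contradiction by using the symmetric argument on $\Phi^{-1}$, which is also a surjective isometry between spaces satisfying the same hypotheses. Two routes work.

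\emph{Counting partners.} Note first that $(1,0)$ and $(0,u)$ are distinct. Consider any point $y\in\PSREp{1}$ with $y\neq(1,0)$ and $y\neq(0,u)$. Its image $\Phi(y)$ is not $(1,0)$, since $\Phi$ is injective and $\Phi(0,u)=(1,0)$. Now look at the distance between $(1,0)$ and $y$:
\begin{itemize}
\item If $\normp{(1,0)-y}=\sqrt[p]{2}$, then $\normp{\Phi(1,0)-\Phi(y)}=\sqrt[p]{2}$. By Lemma~\ref{LemAlternative}, one of $\Phi(1,0)$, $\Phi(y)$ equals $(1,0)$. Both are excluded, since $\Phi(1,0)\neq\Phi(0,u)=(1,0)$ and $\Phi(y)\neq(1,0)$.
\item So the only point at distance $\sqrt[p]{2}$ from $(1,0)$ is $(0,u)$.
\end{itemize}
But by Lemma~\ref{LemSliceCharacterization}, every point of $\PSREp{1}(0)$ is at distance $\sqrt[p]{2}$ from $(1,0)$. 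Hence $\PSREp{1}(0)=\{(0,u)\}$, a singleton.

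\emph{Direct computation.} Simpler still, use the $r$-slices with $0<r<1$, and in particular $(r,(1-r^p)^{1/p}u)$. Its distance to $(0,u)$ is
\[
\sqrt[p]{r^p+\bigl(1-(1-r^p)^{1/p}\bigr)^p}<\sqrt[p]{2}.
\]
I will instead compare distances from $(1,0)$ and from $(0,u)$ to all points. From $(1,0)$, the distance $\sqrt[p]{(1-r)^p+1-r^p}$ takes every value in $[0,\sqrt[p]{2}]$. Under the assumption, $\Phi$ maps $(0,u)$ to $(1,0)$, so the distance profile of $(0,u)$ must match that of $(1,0)$. In particular the points at distance $\sqrt[p]{2}$ from $(0,u)$ must be exactly the preimages of the $0$-slice of $\PSREp{2}$, which is nonempty. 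Yet by Lemma~\ref{LemAlternative} any such point must equal $(1,0)$. Applying the same reasoning to $\Phi^{-1}$ also shows that the $0$-slice of $\PSREp{2}$ is $\{\Phi(1,0)\}$.

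\textbf{Main obstacle.}
The delicate point is closing this degenerate case, where the $0$-slices are singletons. I would first settle whether uniqueness is genuinely needed. Note that $\Phi(1,0)$ has a partner at distance $\sqrt[p]{2}$, namely $(1,0)$. So by Lemma~\ref{LemAlternative} applied in reverse, $\Phi(1,0)$ lies in the $0$-slice. Then a midpoint-type argument can be used: the point $(r,\cdot)$ lies at distances $d_1$ from $(1,0)$ and $d_2$ from $(0,u)$, and these distances are not symmetric when $p>1$. The case $p=1$ would need a separate check. I expect this final asymmetry argument to be the hardest part.
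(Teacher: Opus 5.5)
Your Steps 1 and 2 and the ``counting partners'' part of Step 3 are correct, and they are essentially the paper's argument. They show that if $\Phi(1,0)\neq(1,0)$, then $(0,u)$ is the only point of $\PSREp{1}$ at distance $\sqrt[p]{2}$ from $(1,0)$. In other words, the $0$-slice $\PSREp{1}(0)=\set{(0,w)\colon w\in S^+(E_1)}$ is the singleton $\set{(0,u)}$. The paper ends exactly here. It takes two distinct points $(0,u),(0,v)$ of the $0$-slice; if $\Phi(1,0)\neq(1,0)$, Lemma~\ref{LemAlternative} forces $\Phi(0,u)=(1,0)=\Phi(0,v)$, contradicting injectivity.

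What your proposal lacks is this second point. The ``asymmetry'' argument you plan for the singleton case cannot work, because that case is a genuine counterexample. Suppose $S^+(E_1)=\set{u}$. Every nonzero $w\in\PBE{1}$ satisfies $w/\norm{w}=u$, so $\PBE{1}=\set{tu\colon 0\le t\le 1}$ and $\PSREp{1}=\set{(\a,tu)\colon \a,t\ge 0,\ \a^p+t^p=1}$. Since $\normp{(\a,tu)-(\b,su)}^p=|\a-\b|^p+|t-s|^p$ is symmetric in the two coordinates for every $p$, the map $(\a,tu)\mapsto(t,\a u)$ is a surjective isometry of $\PSREp{1}$ onto itself. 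It sends $(1,0)$ to $(0,u)$. Concretely, $E_1=E_2=\R$ with its usual order satisfies $\diam(B^+(\R))=1$. The swap $(\a,\b)\mapsto(\b,\a)$ on the quarter circle $\set{(\a,\b)\in[0,1]^2\colon \a^p+\b^p=1}$ violates the lemma.

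So no argument can close your gap under the stated hypothesis alone. This also means the paper's justification (``such points exist because $\diam(\PBE{1})=1$'') is not valid. As your Step 1 shows, $\diam(\PBE{1})=1$ only produces one element of $S^+(E_1)$. The lemma, and with it Theorem~\ref{ThmMain1}, needs the additional hypothesis that $S^+(E_1)$ has at least two elements. This does hold in the applications $E=\CR{K}$ with $K$ nonempty and without isolated points: take $\1$ and a norm-one positive Urysohn function vanishing at some point. With that hypothesis added, your counting argument finishes immediately. You should discard the ``direct computation'' and ``asymmetry'' paragraphs: they only re-derive that the $0$-slices are singletons and never reach a contradiction.
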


\begin{proof}
Choose distinct points $(\a, u), (\b, v) \in \PSREp{1}(0)$. 
Note that such points exist because $\diam(\PBE{1}) = 1$. 
It follows from Lemma~\ref{LemSliceCharacterization} that $\normp{(\a, u) - (1, 0)} = \normp{(\b, v) - (1, 0)} = \sqrt[p]{2}$. 
Since $\Phi$ is an isometry, we have 
\begin{align*}
\normp{\Phi(\a, u) - \Phi(1, 0)} = \normp{\Phi(\b, v) - \Phi(1, 0)} = \sqrt[p]{2}. 
\end{align*}
By applying Lemma~\ref{LemAlternative} to $\Phi(\a, u)$ and $\Phi(1, 0)$, we see that 
\begin{align*}
\text{either} \quad 
\Phi(\a, u) = (1, 0) 
\quad \text{or} \quad 
\Phi(1, 0) = (1, 0). 
\end{align*} 
Similarly, we have 
\begin{align*}
\text{either} \quad 
\Phi(\b, v) = (1, 0) 
\quad \text{or} \quad 
\Phi(1, 0) = (1, 0). 
\end{align*}
If we had $\Phi(1, 0) \ne (1, 0)$, then we would have $\Phi(\a, u) = (1, 0) = \Phi(\b, v)$. 
The injectivity of $\Phi$ would imply that $(\a, u) = (\b, v)$, which contradicts $(\a, u) \ne (\b, v)$. 
Therefore, we conclude that $\Phi(1, 0) = (1, 0)$. 
\end{proof}

\begin{lem}\label{LemPreservingSlices}
The surjective isometry $\Phi \colon \PSREp{1} \to \PSREp{2}$ preserves the slices: 
\begin{align*}
\Phi(\PSREp{1}(r)) = \PSREp{2}(r) 
\end{align*}
for every $0 \le r \le 1$. 
\end{lem}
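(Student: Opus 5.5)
The plan is to combine Lemma~\ref{LemPreservingUnity} with the metric characterization of slices in Lemma~\ref{LemSliceCharacterization}. The slices are exactly the level sets of the distance to $(1,0)$. An isometry fixing $(1,0)$ preserves distances to $(1,0)$, so it should map level sets into the corresponding level sets. Surjectivity will then give equality.

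\emph{Step 1: inclusion.} Fix $0 \le r \le 1$ and $(r, u) \in \PSREp{1}(r)$. By Lemma~\ref{LemSliceCharacterization},
\begin{align*}
\normp{(r, u) - (1, 0)} = \sqrt[p]{(1-r)^p + 1 - r^p}.
\end{align*}
By Lemma~\ref{LemPreservingUnity} and the isometry property,
\begin{align*}
\normp{\Phi(r, u) - (1, 0)} = \normp{\Phi(r, u) - \Phi(1, 0)} = \normp{(r, u) - (1, 0)} = \sqrt[p]{(1-r)^p + 1 - r^p}.
\end{align*}
Applying Lemma~\ref{LemSliceCharacterization} in $\R \oplus_p E_2$ (valid since $\diam(\PBE{2}) = 1$; in fact that lemma did not use the diameter assumption), we get $\Phi(r, u) \in \PSREp{2}(r)$. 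Hence $\Phi(\PSREp{1}(r)) \subseteq \PSREp{2}(r)$.

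\emph{Step 2: reverse inclusion.} Let $(r, v) \in \PSREp{2}(r)$. By surjectivity there is $(\a, u) \in \PSREp{1}$ with $\Phi(\a, u) = (r, v)$. Then $(\a,u)$ lies in the slice $\PSREp{1}(\a)$, so Step 1 applied with $\a$ in place of $r$ gives $\Phi(\a, u) \in \PSREp{2}(\a)$. Comparing first coordinates yields $\a = r$, so $(r, v) \in \Phi(\PSREp{1}(r))$.

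Alternatively, one could note that $\Phi^{-1}$ is also a surjective isometry fixing $(1,0)$ and repeat Step 1 for it. Either way the argument is short. The only point needing care is that the characterization in Lemma~\ref{LemSliceCharacterization} is used in both spaces, and that the fixed point $(1,0)$ from Lemma~\ref{LemPreservingUnity} is available; with those in hand, no real obstacle is expected.
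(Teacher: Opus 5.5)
Your proposal is correct and follows the paper's own proof. The paper likewise uses $\Phi(1,0)=(1,0)$ to show that distances to $(1,0)$ are preserved, applies Lemma~\ref{LemSliceCharacterization} in both spaces to get $\Phi(\a,u)\in\PSREp{2}(r)$ if and only if $(\a,u)\in\PSREp{1}(r)$, and then uses bijectivity of $\Phi$ to obtain equality; your Step~2 simply writes out that last step explicitly.
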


\begin{proof}
Fix $(\a, u) \in \PSREp{1}$. 
Note that $\Phi(1, 0) = (1, 0)$ by Lemma~\ref{LemPreservingUnity}. 
Since $\Phi$ is an isometry, we have 
\begin{align*}
\normp{\Phi(\a, u) - (1, 0)} 
= \normp{\Phi(\a, u) - \Phi(1, 0)} 
= \normp{(\a, u) - (1, 0)}. 
\end{align*}
Hence Lemma~\ref{LemSliceCharacterization} shows that, for every $0 \le r \le 1$, $\Phi(\a, u) \in \PSREp{2}(r)$ if and only if $(\a, u) \in \PSREp{1}(r)$. 
Since $\Phi \colon \PSREp{1} \to \PSREp{2}$ is bijective, this proves $\Phi(\PSREp{1}(r)) = \PSREp{2}(r)$. 
\end{proof}

Note that $u \in \PBE{j}$ for every $(\a, u) \in \PSREp{j}$. 
For $j = 1, 2$, let $\pi_j \colon \PSREp{j} \to \PBE{j}$ be the map defined by 
\begin{align*}
\pi_j(\a, u) = u 
\end{align*}
for every $(\a, u) \in \PSREp{j}$. 

\begin{prop}\label{PropProjection}
The map $\pi_j \colon \PSREp{j} \to \PBE{j}$ is bijective, and its inverse is given by 
\begin{align*}
\pi_j^{- 1}(v) = (\sqrt[p]{1 - \norm{v}^p}, v) 
\end{align*}
for every $v \in \PBE{j}$. 
\end{prop}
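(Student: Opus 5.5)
We argue directly from the definitions, using the map $\psi_j \colon \PBE{j} \to \PSREp{j}$ given by
\begin{align*}
\psi_j(v) = (\sqrt[p]{1 - \norm{v}^p}, v).
\end{align*}
The plan is to show that $\psi_j$ is well defined, that $\pi_j \circ \psi_j$ is the identity on $\PBE{j}$, and that $\psi_j \circ \pi_j$ is the identity on $\PSREp{j}$. Together these give bijectivity of $\pi_j$ and identify $\psi_j$ as $\pi_j^{-1}$.

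First we verify that $\pi_j$ actually lands in $\PBE{j}$, as noted before the proposition. If $(\a, u) \in \PSREp{j}$, then $u \ge 0$ by the definition of the product order. Moreover $\norm{u}^p = 1 - \a^p \le 1$, so $u \in \PBE{j}$.

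Next, for $v \in \PBE{j}$ we have $0 \le \norm{v} \le 1$, so $\a := \sqrt[p]{1 - \norm{v}^p}$ is a well-defined number in $[0, 1]$. The pair $(\a, v)$ is positive because $\a \ge 0$ and $v \ge 0$. Its norm satisfies $\normp{(\a, v)}^p = \a^p + \norm{v}^p = 1$. Hence $\psi_j(v) \in \PSREp{j}$, and clearly $\pi_j(\psi_j(v)) = v$, which gives surjectivity of $\pi_j$.

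For injectivity, suppose $(\a, u) \in \PSREp{j}$. The relation $\a^p + \norm{u}^p = 1$, combined with $\a \ge 0$, forces $\a = \sqrt[p]{1 - \norm{u}^p}$, so $\psi_j(\pi_j(\a, u)) = (\a, u)$. Thus $\pi_j$ is injective with inverse $\psi_j$.

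There is no real obstacle here. The only point requiring care is that the first coordinate is uniquely determined by the second, and this rests on the positivity constraint $\a \ge 0$ together with $p < \infty$. Note that the result would fail for $p = \infty$, where $\a$ is not determined by $u$ when $\norm{u} = 1$.
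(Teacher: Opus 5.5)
Your proof is correct and follows essentially the same route as the paper. The paper likewise defines the candidate inverse $\Pi_j(v) = (\sqrt[p]{1-\|v\|^p}, v)$, checks that it lands in the positive part of the unit sphere, and verifies that both compositions are identities; your remark that the argument needs $p<\infty$ agrees with the standing assumption of Theorem~\ref{ThmMain1}.
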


\begin{proof}
For each $v \in \PBE{j}$, we set $\Pi_j(v) = (\sqrt[p]{1 - \norm{v}^p}, v)$. 
Note that $\Pi_j(v) \in \PSREp{j}$, since $\sqrt[p]{1 - \norm{v}^p} \ge 0$, $v \ge 0$ and 
\begin{align*}
\normp{\Pi_j(v)} 
= \normp{(\sqrt[p]{1 - \norm{v}^p}, v)} 
= \sqrt[p]{(1 - \norm{v}^p) + \norm{v}^p} 
= 1. 
\end{align*}
Hence we obtain a well-defined map $\Pi_j \colon \PBE{j} \to \PSREp{j}$. 
It is clear that $(\pi_j \circ \Pi_j)(v) = v$ for every $v \in \PBE{j}$, and thus $\pi_j \circ \Pi_j = \id_{\PBE{j}}$. 

Fix $(\a, u) \in \PSREp{j}$. 
Since $\a^p + \norm{u}^p = \normp{(\a, u)}^p = 1$, we see that $\sqrt[p]{1 - \norm{u}^p} = \a$. 
Hence we have 
\begin{align*}
(\Pi_j \circ \pi_j)(\a, u) 
= \Pi_j(u) 
= (\sqrt[p]{1 - \norm{u}^p}, u) 
= (\a, u) 
\end{align*}
which shows that $\Pi_j \circ \pi_j = \id_{\PSREp{j}}$. 
Thus, we conclude that $\pi_j \colon \PSREp{j} \to \PBE{j}$ is bijective and $\pi_j^{-1} = \Pi_j$. 
\end{proof}

We are now ready to prove Theorem~\ref{ThmMain1}. 

\begin{proof}[\textbf{Proof of Theorem~\ref{ThmMain1}}]
Define a map $\Phi' \colon \PBE{1} \to \PBE{2}$ by 
\begin{align}\label{EqInducedIsometry}
\Phi' = \pi_2 \circ \Phi \circ \pi_1^{-1}. 
\end{align}
The surjectivity of $\Phi'$ follows immediately from the definition. 
Since $\pi_1^{-1}(0) = (1, 0)$, it follows from Lemma~\ref{LemPreservingUnity} that 
\begin{align*}
\Phi'(0) 
= (\pi_2 \circ \Phi \circ \pi_1^{-1})(0) 
= \pi_2(\Phi(1, 0)) 
= \pi_2(1, 0) 
= 0. 
\end{align*}

Let $(\a, u) \in \PSREp{1}$. 
Since $\Phi = \pi_2^{-1} \circ \Phi' \circ \pi_1$, we see that 
\begin{align*}
\Phi(\a, u) 
= (\pi_2^{-1} \circ \Phi')(u) 
= \pi_2^{-1}(\Phi'(u)) 
= (\sqrt[p]{1 - \norm{\Phi'(u)}^p}, \Phi'(u)). 
\end{align*}
On the other hand, Lemma~\ref{LemPreservingSlices} implies that $\Phi(\a, u) \in \Phi(\PSREp{1}(\a)) = \PSREp{2}(\a)$, and hence we must have $\sqrt[p]{1 - \norm{\Phi'(u)}^p} = \a$. 
Thus we conclude that \eqref{EqThmMain1} holds: 
\begin{align}\label{PfEqThmMain1}
\Phi(\a, u) = (\a, \Phi'(u)) 
\end{align}
for every $(\a, u) \in \PSREp{1}$. 

It remains to show that $\Phi' \colon \PBE{1} \to \PBE{2}$ is an isometry. 
To see this, fix $u_0, u_1 \in \PBE{1}$. 
Since $\Phi \colon \PSREp{1} \to \PSREp{2}$ is a surjective isometry, we have 
\begin{align}\label{PfEqThmMain2}
\normp{\Phi(\pi_1^{-1}(u_0)) - \Phi(\pi_1^{-1}(u_1))}^p = \normp{\pi_1^{-1}(u_0) - \pi_1^{-1}(u_1)}^p. 
\end{align}
By Proposition~\ref{PropProjection}, we have $\pi_1^{-1}(u_k) = (\sqrt[p]{1 - \norm{u_k}^p}, u_k)$ for $k = 0, 1$. 
For simplicity of notation, we set $\a_k = \sqrt[p]{1 - \norm{u_k}^p}$. 
Then $\pi_1^{-1}(u_k) = (\a_k, u_k)$. 
Moreover, by \eqref{PfEqThmMain1}, we have $\Phi(\pi_1^{-1}(u_k)) = \Phi(\a_k, u_k) = (\a_k, \Phi'(u_k))$. 
Substituting these equalities into \eqref{PfEqThmMain2}, we obtain $\normp{(\a_0, \Phi'(u_0)) - (\a_1, \Phi'(u_1))}^p = \normp{(\a_0, u_0) - (\a_1, u_1)}^p$, which yields 
\begin{align*}
|\a_0 - \a_1|^p + \norm{\Phi'(u_0) - \Phi'(u_1)}^p 
= |\a_0 - \a_1|^p + \norm{u_0 - u_1}^p. 
\end{align*}
This gives that $\norm{\Phi'(u_0) - \Phi'(u_1)} = \norm{u_0 - u_1}$, which shows that $\Phi' \colon \PBE{1} \to \PBE{2}$ is an isometry. 
The proof is complete. 
\end{proof}


\section{Proof of Main Results}\label{Sec3}

In this section, we prove our two main results, Theorems~\ref{ThmMainC1} and \ref{ThmMainLip}. 
To this end, we shall prove the following key lemma: 

\begin{lem}\label{LemKey}
Let $1 \le p \le \infty$, and let $K$ and $L$ be compact Hausdorff spaces without isolated points. 
If $\Phi \colon \PSRCp{K} \to \PSRCp{L}$ is a surjective isometry, then there exists a homeomorphism $\tau \colon L \to K$ such that 
\begin{align*}
\Phi(\a, u) = (\a, u \circ \tau) 
\end{align*}
for every $(\a, u) \in \PSRCp{K}$. 
\end{lem}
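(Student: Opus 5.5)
Two cases, depending on $p$. For $1 \le p < \infty$ we apply Theorem~\ref{ThmMain1} with $E_1 = \CR{K}$ and $E_2 = \CR{L}$. Both satisfy $\diam(\PBCR{K}) = \diam(\PBCR{L}) = 1$, since $\|u - v\|_\infty \le 1$ for $0 \le u, v \le \1$, with equality for $u = \0$, $v = \1$. This yields a surjective isometry $\Phi' \colon \PBCR{K} \to \PBCR{L}$ with $\Phi'(\0) = \0$ and $\Phi(\a, u) = (\a, \Phi'(u))$. It then suffices to find a homeomorphism $\tau \colon L \to K$ with $\Phi'(u) = u \circ \tau$ for all $u \in \PBCR{K}$.

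To get $\tau$, we reduce to the known result of Leung, Ng and Wong \cite{LeuNgWon} on positive unit spheres of $C(K)$. Preservation of $\0$ gives $\|\Phi'(u)\|_\infty = \|u\|_\infty$. Hence $\Phi'$ maps $\PSCR{K}$ onto $\PSCR{L}$, and by bijectivity each sphere of radius $s$ maps onto the corresponding one. Leung--Ng--Wong then give a homeomorphism $\tau$ with $\Phi'(u) = u \circ \tau$ on $\PSCR{K}$.

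We extend this to all of $\PBCR{K}$ by metric characterization. Fix $u$ with $\|u\|_\infty = s < 1$. The claim is that $u$ is determined by its distances to the elements of $\PSCR{K}$. For $t \in K$, consider $v \in \PSCR{K}$ peaking at $t$, with $v = 1$ near $t$ or $v = 0$ near $t$. The distance $\|u - v\|_\infty$ then encodes $1 - u(t)$ or $u(t)$, up to values attained elsewhere. Here the absence of isolated points helps: it lets us choose $v$ supported in small neighbourhoods while keeping norm one elsewhere.

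A cleaner route is to show $u(t) = 1 - \inf\{\|u - v\|_\infty : v \in \PSCR{K},\ v(t) = 1\}$ up to suitable choices, and to characterize the condition ``$v(t) = 1$'' metrically via $\tau$-compatible families. I expect this step to be the main obstacle: verifying that the formula pins down $u$ and is transported by $\Phi'$. This needs $\Phi'(v) = v \circ \tau$, together with the fact that $v(t) = 1$ if and only if $v \circ \tau$ equals $1$ at $\tau^{-1}(t)$.

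If that route is awkward, the alternative is positive homogeneity. For $u \in \PSCR{K}$, the segment $\{su : 0 \le s \le 1\}$ is the unique metric segment from $\0$ to $u$ within $\PBCR{K}$? This is false in sup norm, so instead I would use the direct distance formula above.

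For $p = \infty$, Theorem~\ref{ThmMain1} is unavailable. Here $\PSRCm{K}$ can be identified with $\PSCR{K \sqcup \{*\}}$, where $*$ is an isolated point carrying the $\R$-coordinate, and the max norm becomes the sup norm. Leung--Ng--Wong then give a homeomorphism $\sigma \colon L \sqcup \{*\} \to K \sqcup \{*\}$ with $\Phi(w) = w \circ \sigma$. Since $K$ and $L$ have no isolated points, $\sigma$ must send the unique isolated point $*$ to $*$. Restricting gives $\tau = \sigma|_L \colon L \to K$ and $\Phi(\a, u) = (\a, u \circ \tau)$.
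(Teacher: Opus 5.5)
Your case $p=\infty$ is the same as the paper's. You identify $\R\oplus_\infty\CR{K}$ with $\CR{\{*\}\cup K}$, apply Leung--Ng--Wong, and use that $*$ is the unique isolated point. For $1\le p<\infty$ you also start as the paper does, via Theorem~\ref{ThmMain1}. Your reduction to the unit sphere is correct: $\Phi'(\0)=\0$ makes $\Phi'$ norm preserving, so it restricts to a surjective isometry $\PSCR{K}\to\PSCR{L}$, and Leung--Ng--Wong gives $\Phi'(v)=v\circ\tau$ there. However, the remaining step, extending this from $\PSCR{K}$ to all of $\PBCR{K}$, is not proved. You state the formula only ``up to suitable choices'' and flag it as the main obstacle. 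The surrounding remarks (peaking functions, the absence of isolated points ``helping'', a metric characterization of ``$v(t)=1$'') are vague or unnecessary. As written, the case $p<\infty$ is incomplete.

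The gap closes in a few lines along your own route. For $u\in\PBCR{K}$ and $t\in K$, put $c=1-u(t)$. Every $v\in\PSCR{K}$ with $v(t)=1$ satisfies $\sn{u-v}\ge c$. The function $v=\min(u+c\1,\1)$ lies in $\PSCR{K}$, has $v(t)=1$, and satisfies $0\le v-u\le c$. Hence
\[ u(t)=1-\min\set{\sn{u-v} : v\in\PSCR{K},\ v(t)=1}. \]
No metric characterization of $v(t)=1$ is needed. Apply the same formula in $L$ to $\Phi'(u)$ at $y$. Write each $w\in\PSCR{L}$ as $w=\Phi'(v)=v\circ\tau$, using bijectivity on the spheres. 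Then $w(y)=1\iff v(\tau(y))=1$ and $\sn{\Phi'(u)-\Phi'(v)}=\sn{u-v}$, so $\Phi'(u)(y)=u(\tau(y))$.

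With this, your argument is a valid alternative to the paper's. The paper notes that $\PBCR{K}$ is a convex body ($\frac12\1$ is interior). It extends $\Phi'$ by Mankiewicz's theorem to a surjective affine, hence linear, isometry $\CR{K}\to\CR{L}$. Banach--Stone plus $\Phi'(\1)\ge 0$ then removes the sign. The paper's route handles the whole ball at once with classical tools. Yours avoids Mankiewicz and Banach--Stone but relies on Leung--Ng--Wong in both cases. In neither route does the absence of isolated points matter when $p<\infty$; it is used only for $p=\infty$.
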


To prove Lemma~\ref{LemKey} for $1 \le p < \infty$, we need to make an observation on the positive part $\PBCR{K}$ of the unit ball of $\CR{K}$. 
In this section, the symbols $K$ and $L$ stand for compact Hausdorff spaces. 

A subset of a Banach space is called a \textit{convex body} if it is a closed convex set with non-empty interior. 
We recall the following theorem due to Mankiewicz. 

\begin{prop}[{\cite[Theorem~5]{Man}}]\label{PropMan}
Let $V$ and $W$ be convex bodies in real Banach spaces $E$ and $F$, respectively. 
Then every surjective isometry $\Psi \colon V \to W$ is uniquely extended to a surjective affine isometry $\widetilde{\Psi} \colon E \to F$. 
\end{prop}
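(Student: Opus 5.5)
Since Proposition~\ref{PropMan} is a classical result quoted from \cite{Man}, we only sketch the route we would follow; it is a localized version of the Mazur--Ulam argument. The global strategy is: (i) show that $\Psi$ maps the interior of $V$ onto the interior of $W$; (ii) show that $\Psi$ preserves midpoints of segments lying in a sufficiently small ball around each interior point; (iii) conclude that $\Psi$ is affine on $V$; and (iv) extend affinely to all of $E$.

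For (i), we would try to characterize interior points metrically. A point $x\in V$ is interior if and only if some small ball around $x$ is contained in $V$. We would express this in terms of the metric of $V$ alone, for instance through the existence of many ``metric midpoints'' and metric reflections through $x$ that stay inside $V$. Applying the same characterization to $\Psi^{-1}$ then gives $\Psi(\operatorname{int}V)=\operatorname{int}W$.

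For (ii), fix $z\in\operatorname{int}V$ and $r>0$ with $B(z,4r)\subset V$, and take $x,y\in B(z,r)$ with midpoint $m=(x+y)/2$. The set $H$ of points $w\in V$ with $\|w-x\|=\|w-y\|=\|x-y\|/2$ is contained in the ball $B(m,r)$, which is symmetric about $m$ and lies in $V$. The reflection $\rho(w)=2m-w$ therefore maps $H$ onto itself. We then run V\"ais\"al\"a's iteration. Consider the family of bijections of $H$ of the form $\Psi^{-1}\circ\rho'\circ\Psi$, where $\rho'$ is the reflection through $\Psi(m')$ on the corresponding set $H'$ for $\Psi(x),\Psi(y)$. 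The diameter of $H$ halves at each step, which forces $\Psi(m)$ to be the center of $H'$, namely $(\Psi(x)+\Psi(y))/2$. For this we need $H'$ to lie in a symmetric ball inside $W$ around that center; this is guaranteed by shrinking $r$, using (i) and the fact that $\Psi$ is an isometry, so it maps small balls into balls of the same radius.

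For (iii) and (iv), local midpoint preservation together with continuity gives that $\Psi$ is affine on each small ball $B(z,r)$. Since $\operatorname{int}V$ is convex, hence connected, these local affine maps glue into a single affine map on $\operatorname{int}V$; by density and continuity it is affine on $V=\overline{\operatorname{int}V}$. Any affine map defined on a set with nonempty interior extends uniquely to an affine map $\widetilde\Psi\colon E\to F$. The extension is isometric because its linear part is isometric on a ball, and it is surjective because its image is an affine subspace containing $W$, which has nonempty interior. Uniqueness is clear for the same reason. The main obstacle is step (ii), specifically ensuring that the reflection sets used in the Mazur--Ulam iteration remain inside $V$ and $W$. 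This is exactly where interiority and the choice of small radii are needed, so step (i) must be secured first.
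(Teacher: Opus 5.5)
\textbf{There is a genuine gap at step (i).} The paper gives no proof of this proposition; it simply quotes Mankiewicz. Judged on its own, your outline fails at (i), and (i) is the heart of the convex-body case. Your step (ii) is not the main obstacle: it is the standard local Mazur--Ulam argument, and it works once both $B(z,4r)\subset V$ and $B(\Psi(z),4r)\subset W$ are available.

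You give no characterization of interior points, and the kind you suggest fails in non-strictly convex spaces. In $C_\R[0,1]$ let $V=\{v:\|v\|_\infty\le 1,\ \int_0^1 v\,dt\ge 0\}$. This is a convex body, and $\0$ is a boundary point of $V$. Yet for every $v\in V\setminus\{\0\}$, with $\varepsilon=\|v\|_\infty=|v(k)|$, there is $v'\in V$ with $\|v'\|_\infty=\varepsilon$ and $\|v-v'\|_\infty=2\varepsilon$. Take $v'=\varepsilon\1$ if $v(k)=-\varepsilon$. If $v(k)=\varepsilon$, take $v'=\varepsilon(\1-2\phi)$, where $\phi$ is a narrow tent at $k$ with $\phi(k)=1$ and $\int\phi<1/2$. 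So the boundary point $\0$ is a metric midpoint between each nearby point of $V$ and a partner in $V$, exactly as an interior point would be. A criterion via isometric involutions of a neighbourhood would itself need a rigidity result of the same strength as the proposition. In finite dimensions, invariance of domain would give (i), but the paper applies the result to the infinite-dimensional spaces $C_\R(K)$.

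\textbf{How to close it: do not prove (i) first.} Because $\operatorname{int}V$ is dense in $V$ and $\Psi$ is a homeomorphism, the set $U=\operatorname{int}V\cap\Psi^{-1}(\operatorname{int}W)$ is nonempty. At points of $U$ your (ii) applies to both $\Psi$ and $\Psi^{-1}$. This gives a surjective affine isometry $T_0$ with $\Psi=T_0$ on a ball $B_0\subset U$.

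Put $g=T_0^{-1}\circ\Psi\colon V\to V'=T_0^{-1}(W)$ and $C=\operatorname{int}V\cap\operatorname{int}V'$; note $C\supset B_0$.
\begin{enumerate}[(a)]
\item The set of points of $C$ near which $g=\id$ is open. It is also closed in $C$. At a limit point $q\in C$, continuity gives $g(q)=q\in\operatorname{int}V'$, so $q\in U$. The local affine map at $q$ agrees with $T_0$ on an open set, hence equals $T_0$.
\item Since $C$ is convex, hence connected, $g=\id$ on $C$, and therefore on $\overline{C}=V\cap V'$.
\item Suppose some $z\in V\setminus V'$ existed, and pick $c_0\in C$. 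The last point $x$ of $[c_0,z]$ in $V'$ lies in $\operatorname{int}V\cap\partial V'$, and $g(x)=x$. Choose $r$ with $B(x,r)\subset V$. Then $g$ maps $B(x,r)$ bijectively onto $B(x,r)\cap V'$, while fixing $B(x,r)\cap V'$ pointwise. Injectivity forces $B(x,r)\subset V'$, contradicting $x\in\partial V'$.
\item By the symmetric argument with $g^{-1}$, $V'\subset V$. Hence $V=V'$ and $\Psi=T_0|_V$.
\end{enumerate}
Preservation of interiors then comes out as a consequence rather than an input.

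Two minor points in (ii): the reflection on the target side should be through $(\Psi(x)+\Psi(y))/2$, not through $\Psi(m')$. Also, V\"ais\"al\"a's argument doubles the displacement of the midpoint; it does not halve diameters (that is Banach's version).
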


To apply Mankiewicz's theorem, we need to show that $\PBCR{K}$ is a convex body in $\CR{K}$. 

\begin{lem}\label{LemConvexBody}
The positive part $\PBCR{K}$ of the unit ball of $\CR{K}$ is a convex body in $\CR{K}$. 
\end{lem}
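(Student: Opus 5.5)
We need to verify three things: $\PBCR{K}$ is convex, it is closed, and it has an interior point in $\CR{K}$. Here
\begin{align*}
\PBCR{K} = \set{u \in \CR{K} \colon 0 \le u(t) \le 1 \text{ for all } t \in K},
\end{align*}
since $u \ge 0$ together with $\sn{u} \le 1$ is equivalent to $0 \le u \le \1$ pointwise.

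For \textbf{convexity}, take $u, v \in \PBCR{K}$ and $0 \le \lambda \le 1$. Then
\begin{align*}
0 \le \lambda u(t) + (1-\lambda) v(t) \le \lambda + (1 - \lambda) = 1 \qquad (t \in K),
\end{align*}
so the convex combination again lies in $\PBCR{K}$.

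For \textbf{closedness}, note that for each $t \in K$ the set $\set{u \colon 0 \le u(t) \le 1}$ is the preimage of $[0,1]$ under the bounded linear functional $u \mapsto u(t)$. It is therefore closed, and $\PBCR{K}$ is the intersection of these closed sets over all $t \in K$. Alternatively, a uniform limit of functions with values in $[0,1]$ has values in $[0,1]$.

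For a \textbf{nonempty interior}, I would take the candidate point $\frac12 \1$ and show that the open ball of radius $\frac12$ around it lies in $\PBCR{K}$. If $\sn{u - \frac12 \1} < \frac12$, then for every $t \in K$ we have $|u(t) - \frac12| < \frac12$, hence $0 < u(t) < 1$, and so $u \in \PBCR{K}$.

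Nothing here is a genuine obstacle. The only point needing care is the interior step, which uses that the order on $\CR{K}$ is pointwise, so that a ball of fixed radius stays inside $[0,1]$ at every point uniformly in $t$. No assumption on isolated points of $K$ is needed.
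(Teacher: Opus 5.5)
Your proof is correct and follows the paper's argument: closedness and convexity are routine, and $\frac{1}{2}\1$ is an interior point. The only difference is that you use the ball of radius $\frac{1}{2}$ around $\frac{1}{2}\1$ where the paper uses radius $\frac{1}{4}$, and both radii work.
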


\begin{proof}
It is straightforward to verify that $\PBCR{K}$ is closed and convex. 
If $u \in \CR{K}$ satisfies $\sn{u - \frac{1}{2} \1} < \frac{1}{4}$, then we have $\frac{1}{4} < u(x) < \frac{3}{4}$ for every $x \in K$, which shows that $u \in \PBCR{K}$. 
Hence the function $\frac{1}{2} \1$ is an interior point of $\PBCR{K}$. 
Consequently, $\PBCR{K}$ is a convex body in $\CR{K}$. 
\end{proof}

To prove Lemma~\ref{LemKey} for $p = \infty$, we need the following result due to Leung, Ng and Wong. 

\begin{prop}[{\cite[Theorem~15]{LeuNgWon}}]\label{PropLNW}
Let $X$ and $Y$ be compact Hausdorff spaces, and let $\Psi \colon \PSCR{X} \to \PSCR{Y}$ be a surjective isometry. 
Then there exists a homeomorphism $\rho \colon Y \to X$ such that 
\begin{align*}
\Psi(u) = u \circ \rho
\end{align*}
for every $u \in \PSCR{X}$. 
\end{prop}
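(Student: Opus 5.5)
The plan is to recover the points of $X$ and $Y$ from the metric structure of $\PSCR{X}$ and $\PSCR{Y}$. Then $\Psi$ will induce a bijection $\rho\colon Y\to X$, and the values $u(x)$ will be read off from distances, which forces $\Psi(u)=u\circ\rho$.

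\textbf{Step 1: distance one.} For $u,v\in\PSCR{X}$ we always have $\sn{u-v}\le 1$, since $0\le u,v\le 1$. Moreover $\sn{u-v}=1$ if and only if there is $x\in X$ with $\{u(x),v(x)\}=\{0,1\}$. For $x\in X$ put
\begin{align*}
F_x=\set{u\in\PSCR{X}\colon u(x)=1}, \qquad Z_x=\set{u\in\PSCR{X}\colon u(x)=0}.
\end{align*}
Every element of $F_x$ is at distance $1$ from every element of $Z_x$.

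\textbf{Step 2: metric characterization of the pairs $(F_x,Z_x)$.} Consider pairs $(F,Z)$ of nonempty subsets of $\PSCR{X}$ such that $\sn{f-z}=1$ for all $f\in F$ and $z\in Z$, maximal under inclusion. I will show that these pairs are exactly $(F_x,Z_x)$ and $(Z_x,F_x)$ for $x\in X$.

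For maximality of $(F_x,Z_x)$, take $g\notin F_x$. Using Urysohn's lemma I will produce $z\in Z_x$ with $|g-z|<1$ everywhere: choose $z=0$ near $x$, and elsewhere let $z$ follow $g$ while peaking at $1$ at a point where $g$ is not $0$. This shows $g$ cannot be added to $F_x$, and symmetrically for $Z_x$.

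The converse is the main obstacle. Given a maximal pair $(F,Z)$, I must find a single point $x$ with $F\subset F_x$ and $Z\subset Z_x$. For each $f\in F$ and $z\in Z$ the set
\begin{align*}
\set{x\colon f(x)=1,\ z(x)=0}\cup\set{x\colon f(x)=0,\ z(x)=1}
\end{align*}
is a nonempty closed set. I intend to use compactness together with maximality: adjoin to $F$ or $Z$ suitable products and minima of its members, which keeps the pair admissible, and so reduce to a finite intersection property that yields a common point.

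\textbf{Step 3: the map $\rho$.} Since $\Psi$ is a surjective isometry, it maps maximal pairs to maximal pairs. This gives a bijection between the unordered pairs, hence a bijection $\rho\colon Y\to X$ with
\begin{align*}
\Psi(\{F_{\rho(y)},Z_{\rho(y)}\})=\{F_y,Z_y\}.
\end{align*}
To fix the orientation, i.e.\ to rule out $\Psi(F_{\rho(y)})=Z_y$, I will use that $F_x\cap F_{x'}\ne\emptyset$ for all $x,x'$, whereas the sets $Z_x$ behave differently. For instance, for $X$ with at least two points, $\bigcap_x Z_x=\emptyset$ while $\bigcap_x F_x=\{\1\}$.

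\textbf{Step 4: recovering values.} For $u\in\PSCR{X}$ I will show that
\begin{align*}
u(x)=\inf_{z\in Z_x}\sn{u-z}.
\end{align*}
The inequality $\ge$ is trivial. For $\le$, take $z=\max(u-u(x)-\varepsilon,0)$ suitably modified by Urysohn's lemma so that $\sn{z}=1$ and $z(x)=0$. Since $\Psi$ maps $Z_{\rho(y)}$ onto $Z_y$ and preserves distances, this yields $\Psi(u)(y)=u(\rho(y))$. Continuity of $\rho$ and of $\rho^{-1}$ then follows because $u\circ\rho$ is continuous for every $u\in\PSCR{X}$, and these functions separate points from closed sets. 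Hence $\rho$ is a homeomorphism with $\Psi(u)=u\circ\rho$.
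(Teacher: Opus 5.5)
Your Step~2 fails, and the claim there is not merely unproved but false. (The paper gives no proof of this statement; it only cites Leung--Ng--Wong, so I am judging your argument on its own.) Suppose $X$ has at least two points, and put $Z=\set{\1}$ and $F=\set{f\in\PSCR{X}\colon \min_X f=0}$. Since $\sn{f-\1}=1-\min_X f$, this pair is admissible, and $F$ already contains every element at distance $1$ from $\1$. Now take $z\in\PSCR{X}$ with $z\neq\1$. Choose $y$ with $z(y)<1$ and an Urysohn function $\phi$ with $\phi(y)=0$ and $\phi=1$ on $z^{-1}(1)$. Then $f=\phi z$ lies in $F$ and $\sn{f-z}<1$. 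Hence $(F,\set{\1})$ is a maximal pair. It is neither $(F_x,Z_x)$, because $\1\notin Z_x$, nor $(Z_x,F_x)$, because $F_x\neq\set{\1}$. For $X=\set{a,b}$ this pair is just $(\set{(1,0),(0,1)},\set{(1,1)})$. The example also shows why your compactness device cannot work. The point witnessing $\sn{f-z}=1$ genuinely depends on $f$. Moreover, $\min(f_1,f_2)$ and $f_1f_2$ need not stay in $\PSCR{X}$: for $(1,0)$ and $(0,1)$ both equal $\0$. So you cannot enlarge $F$ to obtain a finite intersection property.

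Consequently Step~3 does not produce $\rho$. Knowing that $\Psi$ permutes the maximal pairs does not tell you that it sends the pairs $\set{F_x,Z_x}$ to pairs of the same kind rather than to spurious ones like the one above. You need an extra metric invariant that singles out the point pairs, and the obvious candidates fail: in the two-point case $Z_a=\set{(0,1)}$ is a singleton, just like $\set{\1}$. Supplying such an invariant is the real content of the Leung--Ng--Wong theorem. Your orientation argument is also incomplete. Comparing $\bigcap_x F_x=\set{\1}$ with $\bigcap_x Z_x=\emptyset$ only rules out reversing the orientation at every point at once, not on part of $Y$. And once $X$ has at least three points, the sets $Z_x\cap Z_{x'}$ are nonempty too, so pairwise intersections do not separate the $F$'s from the $Z$'s. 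Step~4 and the homeomorphism argument are fine once $\Psi(Z_{\rho(y)})=Z_y$ is known. For example, $z=\max(u-c,0)/(1-c)$ with $c=u(x)+\varepsilon<1$ lies in $Z_x$ and gives $\sn{u-z}\le c$.
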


In order to apply the Leung--Ng--Wong Theorem (Proposition~\ref{PropLNW}) to show Lemma~\ref{LemKey} for $p = \infty$, we need to construct an isometric isomorphism from $\R \oplus_\infty \CR{K}$ onto $\CR{\tilde{K}}$ for some compact Hausdorff space $\tilde{K}$. 

Consider the direct sum topological space $\{x_0\} \cup K$, where $x_0$ is a point distinguished from $K$. 
Endowed with the direct sum topology, $\{x_0\} \cup K$ is a compact Hausdorff space. 
Since $K$ has no isolated points, the distinguished point $x_0$ is a unique isolated point of $\{x_0\} \cup K$. 

For each $(\a, u) \in \R \oplus_\infty \CR{K}$, we define a function $J_K(\a, u) \colon \{x_0\} \cup K \to \R$ by 
\begin{align}\label{EqIOI}
J_K(\a, u)(x) = 
\begin{cases}
\a & (x = x_0), \\
u(x) & (x \in K). 
\end{cases}
\end{align}
It is clear that $J_K(\a, u) \in \CR{\{x_0\} \cup K}$ for all $(\a, u) \in \R \oplus_\infty \CR{K}$. 
Thus this defines a map $J_K \colon \R \oplus_\infty \CR{K} \to \CR{\{x_0\} \cup K}$. 
The proof of the following proposition is routine, so we omit its proof. 

\begin{prop}\label{PropIsomOrdIsom}
The map $J_K \colon \R \oplus_\infty \CR{K} \to \CR{\{x_0\} \cup K}$ is an isometric order isomorphism, whose inverse is given by 
\begin{align}\label{EqInvIOI}
J_K^{- 1}(w) = (w(x_0), w|_K) 
\end{align}
for all $w \in \CR{\{x_0\} \cup K}$. 
In particular, $J_K(\PSRCm{K}) = \PSCR{\{x_0\} \cup K}$. 
\end{prop}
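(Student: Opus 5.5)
The plan is to verify the defining properties of $J_K$ directly from \eqref{EqIOI}, using only that $\{x_0\} \cup K$ carries the direct sum topology, so that $x_0$ is clopen and $K$ is a clopen subset.

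\textbf{Step 1: well-definedness and linearity.} Since $\{x_0\}$ and $K$ are open in $\{x_0\} \cup K$, a function on $\{x_0\} \cup K$ is continuous if and only if its restriction to $K$ is continuous. The value at the isolated point $x_0$ imposes no condition. Hence $J_K(\a, u)$ is continuous. Linearity of $J_K$ is clear from \eqref{EqIOI}, since both pieces depend linearly on $(\a, u)$.

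\textbf{Step 2: bijectivity and the inverse.} Define $R(w) = (w(x_0), w|_K)$ for $w \in \CR{\{x_0\} \cup K}$. Here $w|_K \in \CR{K}$ because restriction preserves continuity. Then $J_K \circ R(w)$ agrees with $w$ at $x_0$ and on $K$, so $J_K \circ R = \id$. Directly from the definition, $R \circ J_K = \id$. This gives bijectivity together with formula \eqref{EqInvIOI}.

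\textbf{Step 3: isometry and order.} We have
\[
\sn{J_K(\a, u)} = \max\set{|\a|, \sup_{x \in K}|u(x)|} = \max\set{|\a|, \sn{u}} = \normm{(\a, u)}.
\]
For the order, $J_K(\a, u) \ge 0$ pointwise holds exactly when $\a \ge 0$ and $u \ge 0$ on $K$, that is, when $(\a, u) \ge 0$ in the product order. By linearity, $J_K$ and $J_K^{-1}$ are therefore both positive, so $J_K$ is an order isomorphism.

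\textbf{Step 4: the positive spheres.} An isometric, bijective, order-preserving map with order-preserving inverse sends norm-one positive elements exactly onto norm-one positive elements. This gives $J_K(\PSRCm{K}) = \PSCR{\{x_0\} \cup K}$.

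There is no real obstacle here. The only point needing care is the continuity claim in Steps 1 and 2, which relies on $x_0$ being isolated, i.e. on the direct sum topology. The hypothesis that $K$ has no isolated points is not needed for this proposition; it only matters later, to make $x_0$ the unique isolated point.
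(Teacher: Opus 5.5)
Your proposal is correct. The paper omits this proof as routine, and your direct verification (continuity from the clopen decomposition, the explicit inverse, the sup-norm computation, and positivity in both directions) is exactly that routine check. Your remark that the no-isolated-points hypothesis on $K$ is not needed here, only later in the proof of Lemma~\ref{LemKey}, is also accurate.
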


To avoid notational complexity, we write the restriction $J_K|_{\PSRCm{K}} \colon \PSRCm{K} \to \PSCR{\{x_0\} \cup K}$ simply as $J_K$. 
By Proposition~\ref{PropIsomOrdIsom}, the restriction $J_K \colon \PSRCm{K} \to \PSCR{\{x_0\} \cup K}$ is a surjective isometry, whose inverse is given by \eqref{EqInvIOI} for every $w \in \PSCR{\{x_0\} \cup K}$. 
This construction allows us to reduce the case $p = \infty$
to an application of the Leung--Ng--Wong theorem.

We are now ready to prove Lemma~\ref{LemKey}. 
The proof is divided into two parts: the case $1 \le p < \infty$ and the case $p = \infty$. 

\begin{proof}[\textbf{Proof of Lemma~\ref{LemKey}}]
We first prove Lemma~\ref{LemKey} for $1 \le p < \infty$. 
Let $\Phi \colon \PSRCp{K} \to \PSRCp{L}$ be a surjective isometry. 
Recalling that $\diam(\PBCR{K}) = 1$, we can apply Theorem~\ref{ThmMain1} to obtain a surjective isometry $\Phi' \colon \PBCR{K} \to \PBCR{L}$ with $\Phi'(\0) = \0$ such that
\begin{align}\label{EqKey0}
\Phi(\a, u) = (\a, \Phi'(u)) 
\end{align}
for every $(\a, u) \in \PSRCp{K}$. 

We show that there exists a homeomorphism $\tau \colon L \to K$ such that 
\begin{align}\label{EqKey1}
\Phi'(u) = u \circ \tau 
\end{align}
for every $u \in \PBCR{K}$. 
Note that, by Lemma~\ref{LemConvexBody}, $\PBCR{K}$ and $\PBCR{L}$ are convex bodies in $\CR{K}$ and $\CR{L}$, respectively. 
By Mankiewicz's theorem (Proposition~\ref{PropMan}), $\Phi' \colon \PBCR{K} \to \PBCR{L}$ extends to a surjective affine isometry $\widetilde{\Phi'} \colon \CR{K} \to \CR{L}$. 
Since $\widetilde{\Phi'}(\0) = \Phi'(\0) = \0$, the isometry $\widetilde{\Phi'}$ must be linear. 
The Banach--Stone theorem guarantees the existence of a continuous function $\gamma \colon L \to \set{1, -1}$ and a homeomorphism $\tau \colon L \to K$ such that 
\begin{align*}
\widetilde{\Phi'}(u)(y) = \gamma(y) u(\tau(y)) 
\end{align*}
for all $u \in \CR{K}$ and $y \in L$. 
Note that $\widetilde{\Phi'}(\1) = \Phi'(\1) \in \PBCR{L}$ since $\1 \in \PBCR{K}$. 
Evaluating the above equality at $u = \1$, we see that $\gamma(y) = \Phi'(\1)(y) \ge 0$, which shows that $\gamma(y) = 1$ for all $y \in L$. 
Thus we have $\widetilde{\Phi'}(u) = u \circ \tau$ for every $u \in \CR{K}$. 
By restricting this equality to $\PBCR{K}$, we obtain \eqref{EqKey1}. 
Combining \eqref{EqKey0} with \eqref{EqKey1}, we therefore conclude that Lemma~\ref{LemKey} holds for $1 \le p < \infty$. 

We next prove Lemma~\ref{LemKey} for $p = \infty$. 
Let $\Phi \colon \PSRCm{K} \to \PSRCm{L}$ be a surjective isometry. 
Consider the direct sum topological spaces $\{x_0\} \cup K$ and $\{y_0\} \cup L$ and the surjective isometry $J_K \colon \PSRCm{K} \to \PSCR{\{x_0\} \cup K}$ and $J_L \colon \PSRCm{L} \to \PSCR{\{y_0\} \cup L}$ defined by \eqref{EqIOI}. 
The composition 
\begin{align*}
\cc{\Phi} = J_L \circ \Phi \circ J_K^{-1} \colon \PSCR{\{x_0\} \cup K} \to \PSCR{\{y_0\} \cup L} 
\end{align*}
is also a surjective isometry. 
Applying the Leung--Ng--Wong Theorem (Proposition~\ref{PropLNW}) to $\cc{\Phi}$, there exists a homeomorphism $\rho \colon \{y_0\} \cup L \to \{x_0\} \cup K$ such that 
\begin{align}\label{EqKey2}
\cc{\Phi}(w) = w \circ \rho 
\end{align}
for every $w \in \CR{\{x_0\} \cup K}$. 
Since $K$ and $L$ have no isolated points, the points $x_0$ and $y_0$ are unique isolated points of $\{x_0\} \cup K$ and $\{y_0\} \cup L$, respectively. 
Thus we see that $\rho(y_0) = x_0$ and $\rho(L) = K$. 
It follows that the restriction $\tau = \rho|_L \colon L \to K$ is a homeomorphism. 

By the definition of $\cc{\Phi}$, we have $\Phi = J_L^{- 1} \circ \cc{\Phi} \circ J_K$. 
Fix $(\a, u) \in \PSRCm{K}$ arbitrarily. 
It follows from \eqref{EqInvIOI} and \eqref{EqKey2} that 
\begin{align*}
\Phi(\a, u) 
= (J_L^{- 1} \circ \cc{\Phi})(J_K(\a, u)) 
= J_L^{- 1}(J_K(\a, u) \circ \rho). 
\end{align*}
Since $\rho(y_0) = x_0$, we have $(J_K(\a, u) \circ \rho)(y_0) = J_K(\a, u)(x_0) = \a$ by \eqref{EqIOI}. 
If $y \in L$, then $\rho(y) = \tau(y) \in K$, and hence $(J_K(\a, u) \circ \rho)(y) = J_K(\a, u)(\tau(y)) = u(\tau(y)) = (u \circ \tau)(y)$. 
It therefore follows from \eqref{EqInvIOI} that 
\begin{align*}
\Phi(\a, u) 
= J_L^{- 1}(J_K(\a, u) \circ \rho) 
= (\a, u \circ \tau), 
\end{align*}
as desired. 
The proof is complete. 
\end{proof}

We now proceed to the proof of Theorems~\ref{ThmMainC1} and \ref{ThmMainLip}. 
First, we consider the space $\CD$ of continuously differentiable functions. 

We define a map $D \colon \CD \to \C \oplus \CC$ by 
\begin{align*}
D(f) = (f(0), f') 
\end{align*}
for every $f \in \CD$. 

\begin{prop}\label{PropC1D}
The map $D \colon \CD \to \C \oplus \CC$ has the following properties: 
\begin{enumerate}[(i)]
\item \label{EqPropC1D1} 
$D$ is a bijective complex-linear operator, whose inverse is given by 
\begin{align*}
D^{- 1}(\a, u)(t) 
= \a + \int_0^t u(x) dx 
\end{align*}
for every $(\a, u) \in \C \oplus \CC$ and $t \in [0, 1]$; 

\item \label{EqPropC1D2} 
$D(\CDR) = \R \oplus \CCR$ and $D|_{\CDR} \colon \CDR \to \R \oplus \CCR$ is an order isomorphism; 

\item \label{EqPropC1D3} 
For every $1 \le p \le \infty$, $D$ is an isometry from $(\CD, \sigp{\cdot})$ onto $\C \oplus_p \CC$. 
\end{enumerate}
\end{prop}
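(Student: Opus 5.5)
The three assertions of Proposition~\ref{PropC1D} are routine consequences of the fundamental theorem of calculus, and I would verify them in order.

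For (\ref{EqPropC1D1}), complex-linearity of $D$ is clear, since $f \mapsto f(0)$ and $f \mapsto f'$ are linear. I define $S \colon \C \oplus \CC \to \CD$ by $S(\a, u)(t) = \a + \int_0^t u(x)\,dx$. Because $u$ is continuous, the fundamental theorem of calculus gives $S(\a, u) \in \CD$ with $S(\a, u)' = u$ and $S(\a, u)(0) = \a$, so $D \circ S = \id$. Conversely, for $f \in \CD$ we have $f(t) = f(0) + \int_0^t f'(x)\,dx$, since $f'$ is continuous. Hence $S \circ D = \id$, and $D$ is bijective with $D^{-1} = S$.

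For (\ref{EqPropC1D2}), if $f$ is real-valued then so are $f(0)$ and $f'$. Conversely, $S$ maps $\R \oplus \CCR$ into $\CDR$. Together these give $D(\CDR) = \R \oplus \CCR$. The order on $\CDR$ is, by definition, $f \le g$ if and only if $f(0) \le g(0)$ and $f' \le g'$ pointwise. This is exactly $D(f) \le D(g)$ in the product order on $\R \oplus \CCR$, so $D|_{\CDR}$ and its inverse are both order preserving.

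For (\ref{EqPropC1D3}), the norm of $\C \oplus_p \CC$ is defined exactly as in $\R \oplus_p E$, with $|\a|$ for $\a \in \C$ and $\sn{\cdot}$ on $\CC$. Then
\[
\normp{D(f)} = \normp{(f(0), f')} = \sigp{f}
\]
directly from the definition, in both the case $p<\infty$ and the case $p=\infty$. Since $D$ is also linear and surjective, it is a surjective isometry.

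None of these steps presents a real obstacle. The only point needing care is that the inversion formula uses the continuity of $u$: this is what guarantees that $t \mapsto \int_0^t u$ is $C^1$ with derivative $u$ everywhere, not merely almost everywhere.
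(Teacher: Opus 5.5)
Your proof is correct and essentially the same as the paper's. The only difference is in how injectivity of $D$ is obtained: you get it from $S \circ D = \id$ via the fundamental theorem of calculus, whereas the paper proves (iii) first, deduces injectivity from the isometry property, and then only checks $D \circ I = \id$.
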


\begin{proof}
It is clear that $D$ is complex-linear. 
We show that $D$ is an isometry from $(\CD, \sigp{\cdot})$ onto $\C \oplus_p \CC$ for every $1 \le p \le \infty$. 
Let $f \in \CD$. 
If $1 \le p < \infty$, then we have 
\begin{align*}
\normp{D(f)}^p 
= \normp{(f(0), f')}^p 
= |f(0)|^p + \sn{f'}^p 
= \sigp{f}^p, 
\end{align*}
and hence $\normp{D(f)} = \sigp{f}$. 
For $p = \infty$, we see that 
\begin{align*}
\normm{D(f)} 
= \max\set{|f(0)|, \sn{f'}} 
= \sigm{f} 
\end{align*}
In either case, $D$ is an isometry, which establishes \eqref{EqPropC1D3}. 
In particular, $D$ must be injective. 

Define $I \colon \C \oplus \CC \to \CD$ by 
\begin{align*}
I(\a, u)(t) = \a + \int_0^t u(x) dx 
\end{align*}
for every $(\a, u) \in \C \oplus \CC$ and $t \in [0, 1]$. 
To prove \eqref{EqPropC1D1}, it remains to show that $D$ is surjective and $D^{-1} = I$. 
If $(\a, u) \in \C \oplus \CC$, then we have $I(\a, u)(0) = \a$ and $I(\a, u)' = u$ by the fundamental theorem of calculus. 
Thus we obtain 
\begin{align*}
(D \circ I)(\a, u) 
= (I(\a, u)(0), I(\a, u)') 
= (\a, u), 
\end{align*}
which proves that $D \circ I = \id_{\C \oplus \CC}$. 
Consequently, we conclude \eqref{EqPropC1D1}. 

By the definitions of $D$ and \eqref{EqPropC1D1}, it is clear that $D(\CDR) = \R \oplus \CCR$. 
By the definitions of orders on $\CDR$ and $\R \oplus \CCR$, for every $f, g \in \CDR$, we have 
\begin{align*}
f \le g \quad \eq \quad 
(f(0), f') \le (g(0), g') \quad \eq \quad 
D(f) \le D(g). 
\end{align*}
Since $D$ is bijective and $D(\CDR) = \R \oplus \CCR$, it is an order isomorphism. 
Hence \eqref{EqPropC1D2} is established. 
\end{proof}

It follows from Proposition~\ref{PropC1D} that $D$ maps $\PSsp = S^+(\CD, \sigp{\cdot}) = S^+(\CDR, \sigp{\cdot})$ onto $\PSCCp{[0, 1]} = \PSRCp{[0, 1]}$: 
\begin{align*}
D(\PSsp) = \PSRCp{[0, 1]}. 
\end{align*}
Moreover, the restriction $D|_{\PSsp} \colon \PSsp \to \PSRCp{[0, 1]}$ is a surjective isometry. 
For simplicity, we write it as $D_p \colon \PSsp \to \PSRCp{[0, 1]}$. 

\begin{proof}[\textbf{Proof of Theorem~\ref{ThmMainC1}}]
Fix $1 \le p \le \infty$. 
Since $D_p \colon \PSsp \to \PSRCp{[0, 1]}$ is a surjective isometry, the composition $\Phi_0 = D_p \circ \Delta \circ D_p^{-1} \colon \PSRCp{[0, 1]} \to \PSRCp{[0, 1]}$ is also a surjective isometry on $\PSRCp{[0, 1]}$. 
Note that $[0, 1]$ has no isolated points. 
Hence Lemma~\ref{LemKey} guarantees the existence of a homeomorphism $\tau \colon [0, 1] \to [0, 1]$ such that 
\begin{align*}
\Phi_0(\a, u) = (\a, u \circ \tau) 
\end{align*}
for every $(\a, u) \in \PSRCp{[0, 1]}$. 

Fix $f \in \PSsp$. 
Since $\Delta = D_p^{-1} \circ \Phi_0 \circ D_p$, we have 
\begin{align*}
\Delta(f) = (D_p^{-1} \circ \Phi_0)(f(0), f') = D_p^{-1}(f(0), f' \circ \tau). 
\end{align*}
It follows from \eqref{EqPropC1D1} of Proposition~\ref{PropC1D} that 
\begin{align*}
\Delta(f)(t) 
= D_p^{-1}(f(0), f' \circ \tau)(t) 
= f(0) + \int_0^t f'(\tau(x)) dx 
\end{align*}
for every $t \in [0, 1]$. 

We now define $T \colon \CD \to \CD$ by 
\begin{align*}
T(g)(t) = g(0) + \int_0^t g'(\tau(x)) dx 
\end{align*}
for every $g \in \CD$ and $t \in [0, 1]$. 
A straightforward computation shows that $T$ is a complex-linear isometry such that $T|_{\PSsp} = \Delta$. 
For any $h_1 \in \CD$, we define $g_1 \colon [0, 1] \to \C$ by 
\begin{align*}
g_1(t) = h_1(0) + \int_0^t h_1'(\tau^{- 1}(x)) dx 
\end{align*}
for every $t \in [0, 1]$. 
Then we see that $g_1 \in \CD$. 
Moreover, we have $g_1(0) = h_1(0)$ and $g_1' = h_1' \circ \tau^{- 1}$, and hence 
\begin{align*}
T(g_1)(t) 
& = g_1(0) + \int_0^t g_1'(\tau(x)) dx 
= h_1(0) + \int_0^t h_1'(x) dx 
= h_1 (t). 
\end{align*}
This shows the surjectivity of $T$. 
Finally, if $g \in \CD$, then we have $T(g)(0) = g(0)$ and $T(g)' = g' \circ \tau$ by the definition of $T$. 
If $1 \le p < \infty$, then we obtain 
\begin{align*}
\sigp{T(g)} 
= \sqrt[p]{|g(0)|^p + \sn{g' \circ \tau}^p} 
= \sqrt[p]{|g(0)|^p + \sn{g'}^p} 
= \sigp{g}. 
\end{align*}
For $p = \infty$, we similarly obtain 
\begin{align*}
\sigm{T(g)} 
= \max\set{|g(0)|, \sn{g' \circ \tau}} 
= \max\set{|g(0)|, \sn{g'}}
= \sigm{g}. 
\end{align*}
Therefore, $T$ is an isometry with respect to the norm $\sigp{\cdot}$. 
The proof is complete. 
\end{proof}

The proof of Theorem~\ref{ThmMainLip} is, for the most part, the same as that of Theorem~\ref{ThmMainC1}. 
Hence we shall concentrate on the differences from the previous proof. 

Recall that every $f \in \Lip$ is differentiable almost everywhere on the closed unit interval $[0, 1]$, and its derivative $f'$ belongs to $L^\infty[0, 1]$. 
Note that $L^\infty[0, 1]$ is a unital commutative $C^\ast$-algebra. 
Let $\M$ be the maximal ideal space of $L^\infty[0, 1]$, and let $\Gamma \colon L^\infty[0, 1] \to C(\M)$ be the Gelfand transform. 
Note that $\Gamma$ is an isometric $\ast$-isomorphism. 
Note also that $\Gamma(L^\infty_\R[0, 1]) = \CR{\M}$ and $\Gamma|_{L^\infty_\R[0, 1]} \colon L^\infty_\R[0, 1] \to \CR{\M}$ is an isometric order isomorphism. 

We define a map $\DL \colon \Lip \to \C \oplus C(\M)$ by 
\begin{align*}
\DL(f) = (f(0), \Gamma(f')) 
\end{align*}
for every $f \in \Lip$. 

\begin{prop}\label{PropLipD}
The map $\DL \colon \Lip \to \C \oplus C(\M)$ has the following properties: 
\begin{enumerate}[(i)]
\item \label{EqPropLipD1} 
$\DL$ is a bijective complex-linear operator, whose inverse is given by 
\begin{align*}
\DL^{- 1}(\a, u)(t) 
= \a + \int_0^t \Gamma^{- 1}(u)(x) dx 
\end{align*}
for every $(\a, u) \in \C \oplus C(\M)$ and $t \in [0, 1]$; 

\item \label{EqPropLipD2} 
$\DL(\LipR) = \R \oplus \CR{\M}$ and $\DL|_{\LipR} \colon \LipR \to \R \oplus \CR{\M}$ is an order isomorphism; 

\item \label{EqPropLipD3} 
For every $1 \le p \le \infty$, $\DL$ is an isometry from $(\Lip, \sigp{\cdot})$ onto $\C \oplus_p C(\M)$. 
\end{enumerate}
\end{prop}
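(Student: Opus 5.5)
The plan follows the proof of Proposition~\ref{PropC1D}, with the Gelfand transform $\Gamma$ placed between $\Lip$ and $L^\infty[0,1]$. The analytic input is the classical fact that $f \in \Lip$ is absolutely continuous. Consequently $f'$ exists almost everywhere, lies in $L^\infty[0,1]$, and satisfies
\begin{align*}
f(t) = f(0) + \int_0^t f'(x)\,dx \qquad (t \in [0,1]).
\end{align*}
Conversely, for every $v \in L^\infty[0,1]$ the function $t \mapsto \a + \int_0^t v(x)\,dx$ is Lipschitz with constant $\normm{v}$. Its derivative equals $v$ almost everywhere by the Lebesgue differentiation theorem.

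\textbf{Step 1: linearity and isometry, i.e.\ \eqref{EqPropLipD3}.} Complex-linearity of $\DL$ is immediate, since $f \mapsto f(0)$, $f \mapsto f'$ (as an element of $L^\infty$) and $\Gamma$ are all linear. For the isometry, use that $\Gamma$ is isometric, so $\sn{\Gamma(f')} = \normm{f'}$, where the right-hand side is the essential supremum. Then the same computation as in Proposition~\ref{PropC1D} gives $\normp{\DL(f)} = \sigp{f}$, treating $1 \le p < \infty$ and $p = \infty$ separately. In particular $\DL$ is injective.

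\textbf{Step 2: surjectivity and the inverse formula, i.e.\ \eqref{EqPropLipD1}.} Define
\begin{align*}
\IL(\a, u)(t) = \a + \int_0^t \Gamma^{-1}(u)(x)\,dx .
\end{align*}
This makes sense because $\Gamma$ is bijective onto $C(\M)$, so $\Gamma^{-1}(u) \in L^\infty[0,1]$. By the converse fact above, $\IL(\a,u) \in \Lip$, $\IL(\a,u)(0) = \a$, and $\IL(\a,u)' = \Gamma^{-1}(u)$ almost everywhere. Hence $\DL \circ \IL = \id$. Together with injectivity this gives bijectivity and $\DL^{-1} = \IL$.

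\textbf{Step 3: the real part and the order, i.e.\ \eqref{EqPropLipD2}.} For $f \in \LipR$ the derivative is real-valued almost everywhere, and $\Gamma(L^\infty_\R[0,1]) = \CR{\M}$, so $\DL(f) \in \R \oplus \CR{\M}$. Conversely, for $(\a,u)$ real, the formula for $\IL(\a,u)$ produces a real function, so $\DL(\LipR) = \R \oplus \CR{\M}$. For the order, since $\Gamma|_{L^\infty_\R[0,1]}$ is an order isomorphism, we have $f' \le g'$ almost everywhere if and only if $\Gamma(f') \le \Gamma(g')$. Therefore $f \le g$ if and only if $\DL(f) \le \DL(g)$.

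The only step that is not purely formal is the analytic fact used in Step~2, namely that the a.e.\ derivative of $\int_0^t v$ recovers $v$ almost everywhere and that Lipschitz functions are the integrals of their derivatives. This is standard Lebesgue theory, which I would simply cite rather than reprove.
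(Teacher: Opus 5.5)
Your proposal is correct and follows essentially the same route as the paper. Isometry, and hence injectivity, comes from $\Gamma$ being isometric. Surjectivity and the inverse formula come from $\DL \circ \IL = \id$, using that $t \mapsto \int_0^t v(x)\,dx$ is Lipschitz with a.e.\ derivative $v$ for $v \in L^\infty[0,1]$. Part (ii) comes from $\Gamma(L^\infty_\R[0,1]) = \CR{\M}$ together with $\Gamma|_{L^\infty_\R[0,1]}$ being an order isomorphism.
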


\begin{proof}
It is clear that $\DL$ is complex-linear. 
Having in mind that the Gelfand transform $\Gamma \colon L^\infty[0, 1] \to C(\M)$ is a linear isometry, it can be shown by calculations similar to those in the proof of Proposition~\ref{PropC1D} that $\normp{\DL(f)} = \sigp{f}$ for every $f \in \Lip$ and $1 \le p \le \infty$. 
Hence we obtain \eqref{EqPropLipD3}. 
In particular, $\DL$ must be injective. 

Define $\IL \colon \C \oplus C(\M) \to \Lip$ by 
\begin{align*}
\IL(\a, u)(t) = \a + \int_0^t \Gamma^{- 1}(u)(x) dx 
\end{align*}
for every $(\a, u) \in \C \oplus C(\M)$ and $t \in [0, 1]$. 
A calculation similar to Proposition~\ref{PropC1D} shows that $\DL \circ \IL = \id_{\C \oplus C(\M)}$. 
Indeed, if $(\a, u) \in \C \oplus C(\M)$, then we have $\IL(\a, u)(0) = \a$ and $\IL(\a, u)' = \Gamma^{-1}(u)$ almost everywhere on $[0, 1]$, and thus 
\begin{align*}
(\DL \circ \IL)(\a, u) 
= (\IL(\a, u)(0), \Gamma(\IL(\a, u)')) 
= (\a, \Gamma(\Gamma^{-1}(u))) 
= (\a, u). 
\end{align*}
Hence $\DL$ is surjective and $\DL^{- 1} = \IL$, which shows \eqref{EqPropLipD1}. 

Recall that $\Gamma(L^\infty_\R[0, 1]) = \CR{\M}$ and $\Gamma|_{L^\infty_\R[0, 1]} \colon L^\infty_\R[0, 1] \to \CR{\M}$ is an isometric order isomorphism. 
Then we obtain $\DL(\LipR) = \R \oplus \CR{\M}$. 
By the definitions of orders on $\LipR$ and $\R \oplus \CR{\M}$, for every $f, g \in \LipR$, we have 
\begin{align*}
f \le g \quad 
& \eq \quad (f(0), f') \le (g(0), g') \\
& \eq \quad (f(0), \Gamma(f')) \le (g(0), \Gamma(g')) \\
& \eq \quad \DL(f) \le \DL(g). 
\end{align*}
Since $\DL$ is bijective and $\DL(\LipR) = \R \oplus \CR{\M}$, it is an order isomorphism. 
Hence \eqref{EqPropLipD2} is established. 
\end{proof}

It follows from Proposition~\ref{PropLipD} that $\DL$ maps $\PSLsp = S^+(\Lip, \sigp{\cdot}) = S^+(\LipR, \sigp{\cdot})$ onto $\PSCCp{\M} = \PSRCp{\M}$: 
\begin{align*}
\DL(\PSLsp) = \PSRCp{\M}. 
\end{align*}
Moreover, the restriction $\DL|_{\PSLsp} \colon \PSLsp \to \PSRCp{\M}$ is a surjective isometry. 
For simplicity, we write it as $\DL_p \colon \PSLsp \to \PSRCp{\M}$. 

In contrast to the proof of Theorem~\ref{ThmMainC1}, it appears not to be trivial that the maximal ideal space $\M$ of $L^\infty[0, 1]$ has no isolated points. 
Fortunately, it is true. 
It may be well-known, but we give a proof for the sake of completeness. 

\begin{prop}\label{PropIsolatedM}
The maximal ideal space $\M$ has no isolated points. 
\end{prop}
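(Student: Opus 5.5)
Suppose, for contradiction, that $\varphi \in \M$ is an isolated point, so $\{\varphi\}$ is open and closed in $\M$. The characteristic function $\chi_{\{\varphi\}}$ is then continuous on $\M$. Since $\Gamma$ is surjective, there is $e = \Gamma^{-1}(\chi_{\{\varphi\}}) \in L^\infty[0,1]$, and $e$ is a projection because $\Gamma$ is a $\ast$-isomorphism. The projections in $L^\infty[0,1]$ are exactly the classes $\chi_A$ of measurable sets $A \subset [0,1]$. As $\chi_{\{\varphi\}} \neq \0$, we have $e = \chi_A$ with $m(A) > 0$, where $m$ denotes Lebesgue measure.

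Lebesgue measure is non-atomic, so we can split $A = A_1 \cup A_2$ into disjoint measurable sets with $m(A_1), m(A_2) > 0$; for instance, take $A_1 = A \cap [0,s]$ with $s$ chosen by continuity of $s \mapsto m(A \cap [0,s])$ so that $m(A_1) = m(A)/2$. Then $\chi_{A_1}$ and $\chi_{A_2}$ are nonzero projections with $\chi_{A_1} + \chi_{A_2} = \chi_A$ and $\chi_{A_1}\chi_{A_2} = \0$. Applying $\Gamma$, the functions $\Gamma(\chi_{A_k})$ are $\{0,1\}$-valued continuous functions on $\M$ with disjoint supports, each nonzero (since $\Gamma$ is isometric), and summing to $\chi_{\{\varphi\}}$. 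A $\{0,1\}$-valued function that is dominated by $\chi_{\{\varphi\}}$ and is nonzero must equal $\chi_{\{\varphi\}}$. Hence both $\Gamma(\chi_{A_1})$ and $\Gamma(\chi_{A_2})$ equal $\chi_{\{\varphi\}}$, so their product is $\chi_{\{\varphi\}} \neq \0$. This contradicts $\Gamma(\chi_{A_1}\chi_{A_2}) = \Gamma(\0) = \0$.

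Alternatively, one can phrase the argument directly via the character: $\varphi(\chi_{A_1}) + \varphi(\chi_{A_2}) = \varphi(\chi_A) = 1$, so one of them, say $\varphi(\chi_{A_1})$, equals $1$. Then $\Gamma(\chi_{A_1}) \le \chi_{\{\varphi\}}$ and $\Gamma(\chi_{A_1})(\varphi) = 1$ force $\Gamma(\chi_{A_1}) = \chi_{\{\varphi\}}$, and hence $\chi_{A_1} = \chi_A$ by injectivity of $\Gamma$. This contradicts $m(A_2) > 0$.

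The only step needing care is identifying $\Gamma^{-1}(\chi_{\{\varphi\}})$ as a characteristic function $\chi_A$. This follows because $e = e^\ast = e^2$ in $L^\infty$ forces $e(x) \in \{0,1\}$ almost everywhere. The rest rests on the non-atomicity of Lebesgue measure, and I expect no real obstacle.
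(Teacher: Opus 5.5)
Your proof is correct and takes essentially the same route as the paper. An isolated point $\varphi$ makes $\chi_{\{\varphi\}}$ a minimal nonzero projection in $C(\M)$. Its preimage under $\Gamma$ is then a nonzero projection $\chi_A$ in $L^\infty[0,1]$, which cannot be minimal because $A$ splits into two disjoint sets of positive measure. You also spell out steps the paper leaves implicit: why $\Gamma^{-1}(\chi_{\{\varphi\}})=\chi_A$ with $m(A)>0$, the explicit splitting of $A$ via the intermediate value theorem, and why a nontrivial splitting contradicts minimality.
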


\begin{proof}
Suppose that $\M$ has an isolated point $q$. 
Then the characteristic function $\chi_{\set{q}}$ is a minimal non-zero projection in $C(\M)$. 
Since $\Gamma \colon L^\infty[0, 1] \to C(\M)$ is a $\ast$-isomorphism, $\Gamma^{-1}(\chi_{\set{q}})$ is a minimal non-zero projection in $L^\infty[0, 1]$. 

On the other hand, every projection in $L^\infty[0, 1]$ is a characteristic function of a measurable subset of $[0, 1]$ with positive Lebesgue measure. 
Since such a set can be decomposed into two disjoint measurable subsets of positive measure, the projection $\Gamma^{-1}(\chi_{\set{q}})$ in $L^\infty[0, 1]$ cannot be minimal.
Therefore, $\M$ has no isolated points. 
\end{proof}

The proof of Theorem~\ref{ThmMainLip} is parallel to that of
Theorem~\ref{ThmMainC1}, and we only indicate the necessary modifications. 

\begin{proof}[\textbf{Proof of Theorem~\ref{ThmMainLip}}]
Fix $1 \le p \le \infty$. 
Since $\DL_p \colon \PSLsp \to \PSRCp{\M}$ is a surjective isometry, the composition $\Phi_0 = \DL_p \circ \Delta \circ \DL_p^{-1} \colon \PSRCp{\M} \to \PSRCp{\M}$ is also a surjective isometry on $\PSRCp{\M}$. 
By Proposition~\ref{PropIsolatedM}, $\M$ has no isolated points. 
Hence Lemma~\ref{LemKey} guarantees the existence of a homeomorphism $\tau \colon \M \to \M$ such that 
\begin{align*}
\Phi_0(\a, u) = (\a, u \circ \tau) 
\end{align*}
for every $(\a, u) \in \PSRCp{\M}$. 

Fix $f \in \PSLsp$. 
Since $\Delta = \DL_p^{-1} \circ \Phi_0 \circ \DL_p$, we have 
\begin{align*}
\Delta(f) = (\DL_p^{-1} \circ \Phi_0)(f(0), \Gamma(f')) = \DL_p^{-1}(f(0), \Gamma(f') \circ \tau). 
\end{align*}
It follows from \eqref{EqPropLipD1} of Proposition~\ref{PropLipD} that 
\begin{align*}
\Delta(f)(t) 
= \DL_p^{-1}(f(0), \Gamma(f') \circ \tau)(t) 
= f(0) + \int_0^t \Gamma^{- 1}(\Gamma(f') \circ \tau)(x) dx 
\end{align*}
for every $t \in [0, 1]$. 

Define $\Lambda \colon L^\infty[0, 1] \to L^\infty[0, 1]$ by 
\begin{align*}
\Lambda(u) = \Gamma^{- 1}(\Gamma(u) \circ \tau) 
\end{align*}
for every $u \in L^\infty[0, 1]$. 
Both the Gelfand transform $\Gamma \colon L^\infty[0, 1] \to C(\M)$ and the composition operator $C(\M) \ni v \mapsto v \circ \tau \in C(\M)$ are isometric $\ast$-isomorphisms between $C^\ast$-algebras, and hence so is $\Lambda$. 

We now define $T \colon \Lip \to \Lip$ by 
\begin{align*}
T(g)(t) = g(0) + \int_0^t \Lambda(g')(x) dx 
\end{align*}
for every $g \in \Lip$ and $t \in [0, 1]$. 
A straightforward computation shows that $T$ is a complex-linear isometry such that $T|_{\PSLsp} = \Delta$. 
For any $h_1 \in \Lip$, we define $g_1 \colon [0, 1] \to \C$ by 
\begin{align*}
g_1(t) = h_1(0) + \int_0^t \Lambda^{- 1}(h_1')(x) dx 
\end{align*}
for every $t \in [0, 1]$. 
Then we see that $g_1 \in \Lip$. 
Moreover, we have $g_1(0) = h_1(0)$ and $g_1' = \Lambda^{- 1}(h_1')$ almost everywhere on $[0, 1]$, and hence 
\begin{align*}
T(g_1)(t) 
& = g_1(0) + \int_0^t \Lambda(g_1')(x) dx 
= h_1(0) + \int_0^t h_1'(x) dx 
= h_1 (t). 
\end{align*}
This shows the surjectivity of $T$. 

Finally, if $g \in \Lip$, then we have $T(g)(0) = g(0)$ and $T(g)' = \Lambda(g')$ almost everywhere on $[0, 1]$ by the definition of $T$. 
If $1 \le p < \infty$, then we obtain 
\begin{align*}
\sigp{T(g)} 
= \sqrt[p]{|g(0)|^p + \sn{\Lambda(g')}^p} 
= \sqrt[p]{|g(0)|^p + \sn{g'}^p} 
= \sigp{g}. 
\end{align*}
For $p = \infty$, we similarly obtain 
\begin{align*}
\sigm{T(g)} 
= \max\set{|g(0)|, \sn{\Lambda(g')}} 
= \max\set{|g(0)|, \sn{g'}}
= \sigm{g}. 
\end{align*}
Therefore, $T$ is an isometry with respect to the norm $\sigp{\cdot}$. 
The proof is complete. 
\end{proof}


\section{Corollaries and Remarks}\label{Sec4}

The descriptions obtained in Theorems~\ref{ThmMainC1} and \ref{ThmMainLip} immediately yield extension results for surjective isometries on the positive part of the unit sphere.
In particular, such isometries are induced by isometric order isomorphisms of the corresponding real function spaces.


\subsection{Extension to Isometric Order Isomorphisms}

The following corollaries show that surjective isometries on the positive part of the unit sphere extend to isometric order isomorphisms of $\CDR$ and $\LipR$. 
We first consider the case of $\CDR$. 

\begin{cor}\label{CorMainC1}
Let $\Delta \colon \PSsp \to \PSsp$ be a surjective isometry. 
Then there exists an isometric order isomorphism $\mathcal{T} \colon \CDR \to \CDR$ such that $\mathcal{T}|_{\PSsp} = \Delta$. 
\end{cor}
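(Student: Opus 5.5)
The plan is to take the complex-linear isometry $T$ supplied by Theorem~\ref{ThmMainC1} and restrict it to $\CDR$; we set $\mathcal{T} = T|_{\CDR}$. Theorem~\ref{ThmMainC1} gives the explicit representation
\begin{align*}
T(f)(t) = f(0) + \int_0^t f'(\tau(x))\, dx \qquad (f \in \CD,\ t \in [0,1])
\end{align*}
for a homeomorphism $\tau \colon [0,1] \to [0,1]$. Since $T$ extends $\Delta$, the identity $\mathcal{T}|_{\PSsp} = \Delta$ is automatic, because $\PSsp \subset \CDR$. What remains is to show that $\mathcal{T}$ maps $\CDR$ onto $\CDR$, and that it is an isometric order isomorphism.

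\textbf{Step 1: $\mathcal{T}$ maps $\CDR$ onto $\CDR$.} If $f$ is real-valued, then $f(0)$ and $f'\circ\tau$ are real, so $T(f) \in \CDR$. Conversely, for real $h$, the preimage constructed in the surjectivity part of the proof of Theorem~\ref{ThmMainC1} is
\begin{align*}
g_1(t) = h(0) + \int_0^t h'(\tau^{-1}(x))\, dx,
\end{align*}
which is again real-valued. Hence $\mathcal{T}$ is a real-linear bijection of $\CDR$ onto itself.

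\textbf{Step 2: $\mathcal{T}$ is isometric.} This follows at once because $T$ is an isometry for $\sigp{\cdot}$.

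\textbf{Step 3: order preservation in both directions.} Here we use the formulas $T(f)(0) = f(0)$ and $T(f)' = f'\circ\tau$. For $f, g \in \CDR$ these give
\begin{align*}
f \le g \iff f(0)\le g(0) \text{ and } f'\le g' \text{ on } [0,1] \iff T(f)(0)\le T(g)(0) \text{ and } f'\circ\tau \le g'\circ\tau \text{ on } [0,1].
\end{align*}
The middle equivalence on derivatives holds precisely because $\tau$ is a bijection of $[0,1]$. Equivalently, one may argue via Proposition~\ref{PropC1D}(ii): $D\circ\mathcal{T}\circ D^{-1}(\a,u) = (\a, u\circ\tau)$, and this composition operator is clearly an order isomorphism of $\R\oplus\CCR$.

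No real obstacle is expected. The only point needing care is that the order on $\CDR$ is the derivative order rather than the pointwise order, so order preservation must be checked through $f(0)$ and $f'$, and surjectivity of $\tau$ is what yields the reverse implication.
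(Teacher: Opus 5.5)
Your proposal is correct and matches the paper's proof. The paper likewise restricts the $T$ from Theorem~\ref{ThmMainC1} to $\CDR$, notes that $f\in\CDR$ if and only if $T(f)\in\CDR$, and derives the order isomorphism from $\mathcal{T}(f)(0)=f(0)$, $\mathcal{T}(f)'=f'\circ\tau$ and the surjectivity of $\tau$.
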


\begin{proof}
By Theorem~\ref{ThmMainC1}, every surjective isometry $\Delta \colon \PSsp \to \PSsp$ extends to a surjective complex-linear isometry $T \colon \CD \to \CD$ of the form 
\begin{align*}
T(f)(t) = f(0) + \int_0^t f'(\tau(x)) dx, 
\end{align*}
where $\tau \colon [0, 1] \to [0, 1]$ is a homeomorphism. 
The above representation of $T$ implies that $f \in \CDR$ if and only if $T(f) \in \CDR$. 
Thus the restriction $\mathcal{T} = T|_{\CDR} \colon \CDR \to \CDR$ is a surjective real-linear isometry. 

We show that $\mathcal{T}$ is an order isomorphism. 
Note first that, for any $f \in \CDR$, we see that $\mathcal{T}(f)(0) = f(0)$ and $\mathcal{T}(f)' = f' \circ \tau$. 
Fix $f, g \in \CDR$. 
Since $\tau \colon [0, 1] \to [0, 1]$ is surjective, it follows that $\mathcal{T}(f)' \le \mathcal{T}(g)'$ if and only if $f' \le g'$. 
Hence 
\begin{align*}
\mathcal{T}(f)(0) \le \mathcal{T}(g)(0) \,\,\,\, \text{and} \,\,\,\, \mathcal{T}(f)' \le \mathcal{T}(g)' \,\, \text{on} \,\, [0, 1] 
\quad \eq \quad 
f(0) \le g(0) \,\,\,\, \text{and} \,\,\,\, f' \le g' \,\, \text{on} \,\, [0, 1], 
\end{align*}
which shows that $\mathcal{T}(f) \le \mathcal{T}(g)$ if and only if $f \le g$. 
Therefore, we conclude that $\mathcal{T} \colon \CDR \to \CDR$ is an isometric order isomorphism. 
Since $\mathcal{T} = T|_{\CDR}$ and $T|_{\PSsp} = \Delta$, we also have $\mathcal{T}|_{\PSsp}=\Delta$. 
The proof is complete. 
\end{proof}

The following result is the counterpart of Corollary~\ref{CorMainC1} for $\LipR$. 

\begin{cor}\label{CorMainLip}
Let $\Delta \colon \PSLsp \to \PSLsp$ be a surjective isometry. 
Then there exists an isometric order isomorphism $\mathcal{T} \colon \LipR \to \LipR$ such that $\mathcal{T}|_{\PSLsp} = \Delta$. 
\end{cor}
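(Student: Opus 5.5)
We follow the proof of Corollary~\ref{CorMainC1}, replacing the composition $f' \mapsto f' \circ \tau$ by the map $\Lambda$ from Theorem~\ref{ThmMainLip}.

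By Theorem~\ref{ThmMainLip}, $\Delta$ extends to a surjective complex-linear isometry $T \colon \Lip \to \Lip$ of the form
\begin{align*}
T(f)(t) = f(0) + \int_0^t \Lambda(f')(x) dx,
\end{align*}
where $\Lambda = \Gamma^{-1}(\Gamma(\cdot) \circ \tau)$ is an isometric $\ast$-isomorphism of $L^\infty[0, 1]$. Set $\mathcal{T} = T|_{\LipR}$.

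\textbf{Step 1: $\mathcal{T}$ maps $\LipR$ onto $\LipR$.} Since $\Lambda$ is a $\ast$-isomorphism, it maps the self-adjoint part $L^\infty_\R[0, 1]$ onto itself. The same holds for $\Lambda^{-1} = \Gamma^{-1}(\Gamma(\cdot) \circ \tau^{-1})$, which is also a $\ast$-isomorphism. For $f \in \LipR$ we have $f(0) \in \R$ and $f' \in L^\infty_\R[0, 1]$, so $T(f)$ is real-valued. Conversely, for $h \in \LipR$, the preimage $g_1$ built in the surjectivity part of the proof of Theorem~\ref{ThmMainLip} uses $\Lambda^{-1}(h')$, which is real, so $g_1 \in \LipR$. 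Hence $\mathcal{T}$ is a surjective real-linear isometry of $\LipR$.

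\textbf{Step 2: $\mathcal{T}$ preserves the order in both directions.} For $f \in \LipR$ we have $\mathcal{T}(f)(0) = f(0)$ and $\mathcal{T}(f)' = \Lambda(f')$ almost everywhere. So it suffices to show that for real $u, v \in L^\infty_\R[0, 1]$,
\begin{align*}
u \le v \ \text{a.e.} \quad \eq \quad \Lambda(u) \le \Lambda(v) \ \text{a.e.}
\end{align*}
We use the fact, recalled before Proposition~\ref{PropLipD}, that $\Gamma|_{L^\infty_\R[0,1]}$ is an isometric order isomorphism onto $\CR{\M}$. Then:
\begin{itemize}
\item $u \le v$ a.e.\ holds if and only if $\Gamma(u) \le \Gamma(v)$ pointwise on $\M$.
\item Since $\tau$ is a surjective homeomorphism of $\M$, this holds if and only if $\Gamma(u) \circ \tau \le \Gamma(v) \circ \tau$ pointwise.
\item Applying $\Gamma^{-1}$, this holds if and only if $\Lambda(u) \le \Lambda(v)$ a.e.
\end{itemize}
Combined with the condition at $0$, this gives $f \le g \eq \mathcal{T}(f) \le \mathcal{T}(g)$. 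Finally, $\mathcal{T}|_{\PSLsp} = \Delta$, because $\PSLsp \subset \LipR$ and $T|_{\PSLsp} = \Delta$.

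The only step that needs care beyond the $C^1$ case is Step 2's use of the Gelfand transform's order-preserving property together with the surjectivity of $\tau$. This replaces the pointwise argument used in Corollary~\ref{CorMainC1}. Alternatively, we could note that $\ast$-isomorphisms preserve positivity, since $w \ge 0$ iff $w = a^\ast a$, and apply this to $v - u$ and to $\Lambda^{-1}$.
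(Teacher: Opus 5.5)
Your proposal is correct and follows the paper's proof. You restrict the extension $T$ from Theorem~\ref{ThmMainLip} to $\LipR$, with $\Lambda$ taking the place of $f' \mapsto f' \circ \tau$. The differences are minor. You show that $\Lambda$ preserves order in both directions on $L^\infty_\R[0,1]$ by factoring through $\Gamma$ and using that $\tau$ is surjective, whereas the paper simply cites the general fact that $\ast$-isomorphisms preserve order on self-adjoint elements (your stated alternative). You also spell out why $T$ maps $\LipR$ onto itself, which the paper leaves implicit.
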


\begin{proof}
The proof is analogous to that of Corollary~\ref{CorMainC1}.
We only have to replace the composition operator $f' \mapsto f' \circ \tau$ by the isometric $*$-isomorphism $\Lambda \colon L^\infty[0,1] \to L^\infty[0,1]$ appearing in Theorem~\ref{ThmMainLip}. 
Since every $*$-isomorphism preserves the order on self-adjoint elements, the resulting extension is an isometric order isomorphism on $\LipR$. 
\end{proof}


\subsection{The Space $C^n[0, 1]$ and $\AC$}

A natural question is whether analogous results can be obtained for the space $C^n[0, 1]$ of $n$-times continuously differentiable functions. 
For each $f \in C^n[0, 1]$, define
\begin{align*}
\sigp{f} = 
\begin{dcases}
\sqrt[p]{\sum_{k = 0}^{n - 1}|f^{(k)}(0)|^p + \sn{f^{(n)}}^p} & (1 \le p < \infty), \\
\max\set{|f(0)|, \dots, |f^{(n-1)}(0)|, \sn{f^{(n)}}} & (p = \infty). 
\end{dcases}
\end{align*}
Then $\sigp{\cdot}$ defines a complete norm on $C^n[0, 1]$. 
Moreover, if we define an order on $C^n_\R[0, 1]$ by $f \le g$ whenever 
\begin{align*}
f^{(k)}(0) \le g^{(k)}(0) \,\, (0 \le k \le n - 1) \quad \text{and} \quad f^{(n)} \le g^{(n)} \,\, \text{on} \,\, [0, 1], 
\end{align*}
then $C^n_\R[0, 1]$ is an ordered Banach space. 
This space admits a decomposition of the form 
\begin{align*}
C^n_\R[0, 1] \simeq \R \oplus C^{n - 1}_\R[0, 1]. 
\end{align*}
Nevertheless, the positive part $B^+(C^{n - 1}_\R[0, 1])$ of the unit ball of $C^{n - 1}_\R[0, 1]$ has diameter $2$. 
Consequently, the crucial assumption of Theorem~\ref{ThmMain1} fails, and the argument developed in Section~\ref{Sec2} cannot be applied directly.

Another space closely related to $\CD$ and $\Lip$ is the space $\AC$ of absolutely continuous functions. 
Every function $f \in \AC$ is differentiable almost everywhere on $[0, 1]$, and its derivative belongs to $L^1[0, 1]$. 
For each $f \in \AC$, define
\begin{align*}
\sigp{f} = 
\begin{cases}
\sqrt[p]{|f(0)|^p + \norm{f'}_{L^1}^p} & (1 \le p < \infty), \\
\max\set{|f(0)|, \norm{f'}_{L^1}} & (p = \infty). 
\end{cases}
\end{align*}
Equipped with this norm, $\AC$ becomes a Banach space. 
Moreover, if we define an order on $\ACR$ by $f \le g$ whenever 
\begin{align*}
f(0) \le g(0) \quad \text{and} \quad f' \le g' \,\, \text{almost everywhere on} \,\, [0, 1], 
\end{align*}
then $\ACR$ is an ordered Banach space. 
As in the cases of $\CDR$ and $\LipR$, the space $\ACR$ can be decomposed as 
\begin{align*}
\ACR \simeq \R \oplus L^1_\R[0, 1].  
\end{align*}
Unlike the spaces considered in this paper, however, the positive part of the unit ball of $L^1[0,1]$ has diameter $2$. 
Therefore, the hypothesis of Theorem~\ref{ThmMain1} is not satisfied, and the approach developed in Section~\ref{Sec2} does not apply directly to $\AC$ either. 
In addition, Lemma~\ref{LemConvexBody} is invalid in the $\AC$ case because $B^+(L^1[0, 1])$ is not a convex body, even in $L^1_\R[0, 1]$.


\section{Acknowledgments}

The fourth author (Min-Ruei Lin) is supported by the National Science and Technology Council, Taiwan (NSTC), under Grant No.~115-2917-I-110-009.


\end{document}